\providecommand{\U}[1]{\protect\rule{.1in}{.1in}}
\newcolumntype{Y}{>{\raggedleft\arraybackslash}X}
\def\bc{{\mathbb{C}}}
\def\bn{{\mathbb{N}}}
\def\br{{\mathbb{R}}}
\def\bz{{\mathbb{Z}}}
\def\br{\mathbb R}
\def\vs{\vskip.3cm}
\def\noi{\noindent}
\def\gdeg{G\text{\rm -deg}}
  \lstdefinelanguage{GAP}{
    basicstyle=\ttfamily,
    keywords={true, false, function, return, fail, if, in, while, do, od, else, elif, fi, break, continue},
    keywordstyle=\color{blue}\bfseries,
    otherkeywords={
      >, <, ==
    },
    identifierstyle=\color{black},
    sensitive=True,
    comment=[l]{\#},
    commentstyle=\color{cyan},
    stringstyle=\color{red},
    morestring=[b]',
    morestring=[b]"
  }
 \newcommand{\amal}[5]{#1\prescript{#4}{}\times_{#3}^{#5}#2}
\def\wh{\widehat}
\def\ve{\varepsilon}
\DeclareMathOperator{\id}{Id}
\newcommand\cU{\ensuremath{\mathcal U}}
\newcommand\cV{\ensuremath{\mathcal V}}
\newcommand\cW{\ensuremath{\mathcal W}}
\newtheorem{theorem}{Theorem}[section]
\newtheorem{lemma}[theorem]{Lemma}
\newtheorem{remark}[theorem]{Remark}
\newtheorem{remark-definition}[theorem]{Remark and Definition}
\begin{document}

\title[Non-Radial Solutions to Elliptic Systems]{Patterns of Non-Radial Solutions to Coupled Semilinear Elliptic Systems on a Disc}

\author[Z. Balanov --- E. Hooton --- W. Krawcewicz --- D. Rachinskii]{Zalman Balanov --- Edward Hooton --- Wieslaw Krawcewicz --- Dmitrii Rachinskii}

\address
{\textsc{Zalman Balanov}\\
Department of Mathematics\\ Xiangnan University\\
Chenzhou, Hunan 423000, China, and\\
Department of Mathematical Sciences\\
University of Texas at Dallas\\
Richardson, TX 75080, USA}

\email{balanov@utallas.edu}

\address
{\textsc{Edward Hooton}\\
Institute of Mathematics\\
Czech Academy of Sciences \\
\v{Z}itn\'a 25, 11567 Praha 1\\
Czech Republic}

\email{hooton@math.cas.cz}

\address
{\textsc{Wieslaw Krawcewicz}\\
Applied Mathematics Center at Guangzhou University\\
Guangzhou 510006, China, and\\
Department of Mathematical Sciences\\
University of Texas at Dallas\\
Richardson, TX 75080, USA.}

\email{wieslaw@utallas.edu}

\address
{\textsc{Dmitrii Rachinskii}\\
Department of Mathematical Sciences\\
University of Texas at Dallas\\
Richardson, TX 75080, USA}

\email{dmitry.rachinskiy@utdallas.edu}

\subjclass[2010] {Primary: 35B06; Secondary: 47H11, 35J91}

\keywords{Dirichlet Laplacian; non-radial solutions; equivariant
Brouwer degree}

\thanks{The first author Z. Balanov acknowledges the support from Xiangnan University. 
	This work has been done as a part of the Prospective Human
	Resources Support Program of the Czech Academy of Sciences, and the second author E. Hooton acknowledges the support by this program.
	The third author W. Krawcewicz acknowledges the support from National Science Foundation of China through the grant no 11871171, and the support from Guangzhou University.  The authors are also grateful to Thomas Bartsch and Jianshe Yu for their comments.}

\begin{abstract}
In this paper, we prove the existence of non-radial solutions to the problem $-\triangle u=\bm f(z,u)$, $u|_{\partial D}=0$  on the unit disc $D:=\{z\in \bc : |z|<1\}$ with   $u(z)\in \br^k$, 
where $\bm f$ is a sub-linear continuous function, differentiable with respect to $u$ at zero and satisfying   $\bm f(e^{i\theta}z,u) = \bm f(z,u)$ for all $\theta\in \br$,  $\bm f(z,-u)=- \bm f(z,u)$. Under the assumption that $\bm f$ respects additional (spacial) symmetries on
$\mathbb R^k$, we investigate symmetric properties of the corresponding non-radial solutions. The abstract result is supported by 
a numerical example with extra $S_4$-symmetries.
\end{abstract}

\maketitle


\section{Introduction} 

\noi{\bf (a) Subject.} It is a classical problem of mathematical physics to determine to which extend the solutions of elliptic BVPs inherit symmetric properties of the domain on which they are defined. For instance, the classical fixed membrane eigenvalue problem on a disc  $D$, which is 
expressed as an eigenvalue Laplace problem with  Dirichlet boundary conditions, admits a simple first eigenvalue with positive  eigenfunction exhibiting   full symmetries, while other eigenfunctions are  nodal and demonstrate symmetry breaking.  As it was established in \cite{GN}, there exists a quite large class of similar nonlinear elliptic equations for which positive solutions  exhibit full symmetries.  For a detailed exposition of how symmetric properties of  solutions (for nonlinear scalar Laplace equation) depend on geometric properties of associated energy functionals, we refer to the elegant survey  by T. Weth \cite{Weth}.

\vs
In contrast to the scalar case, systems of elliptic BVPs lead to two different  settings: variational and non-variational. In the first case, the existence of non-radial solutions was studied by many authors, see for example \cite{BDZ,BdF,BCM,CR,LWZ} and references therein.  
In the case of non-variational elliptic systems, there is a lot of interests in the existence of positive solutions. The standard method is based on  establishing  a priori bounds for positive solutions  and  using the so-called Liouville-type theorems and/or applying the  Krasnoselskii's fixed-point theorem in a cone, see \cite{CFM,C,Mon,Sou} and the references therein.

\vs

In this paper we are interested in studying the existence of non-radial solutions to the following  non-variational Laplace system on the disc  $D:=\{z\in \bc: |z|<1\}$:
\begin{equation}\label{eq:Lap}
\begin{cases}
-\triangle u=\bm f(z,u), \quad u(z)\in \br^k, \\
u|_{\partial D}=0,
\end{cases}
\end{equation}
where 
$\bm f:\overline D\times \br^k\to \br^k$ 
is a continuous odd radially symmetric  function of sublinear growth, which is differentiable at zero. To be more precise, we assume that $\bm f$  satisfies the following conditions:
  \begin{enumerate}[label=($A_\arabic*$)]
  	\item\label{c1}  $\bm f(e^{i\theta}z,u)=\bm f(z,u)$ for all $z\in D$, $u\in \br^k$ and $\theta\in \br$;
	\item\label{c2}  $\bm f(z,-u)=-\bm f(z,u)$ for all $z\in D$, $u\in \br^k$;      
	
	\item\label{c3}  there exists a $k\times k$-matrix  $A$, $c>0$  and $\beta>1$ such that 
	\begin{equation}\label{eq:s1-1}
 |\bm f(z,u)-Au|\le c|u|^\beta \quad \text{for al} \quad
 {z\in \overline{D}}, {u\in \br^k};
	\end{equation}
	\item\label{c4} there exist $a$, $b>0$ and $\alpha\in (0,1)$ such that 
		\begin{equation}\label{eq:s1-2}
		 |\bm f(z,u)|<a|u|^\alpha +b \quad \text{for al} \quad
		 {z\in \overline{D}}, {u\in \br^k}.
		\end{equation}

	\end{enumerate}
Observe that conditions \ref{c1}	 and  \ref{c2} imply that system \eqref{eq:Lap} is $G:=O(2) \times \mathbb Z_2$-symmetric.	 
\vs
Observe that given an  orthogonal $G$-representation  $V$ (here  $G$ stands for a compact Lie group) and an admissible $G$-pair  $(f,\Omega)$  in $V$
(i.e., $\Omega\subset V$ is an open bounded $G$-invariant set and $f:V\to V$ a $G$-equivariant map without zeros on $\partial \Omega$), the Brouwer degree $d_H:=\deg(f^H,\Omega^H)$ is 
well-defined for any $H \le  G$ (here $\Omega^H:= \{x \in \Omega\, :\, hx = x\; \forall h \in H\}$ 
and $f^H:= f|_{\Omega^H}$). Then, if for some $H$, one has $d_H\not=0$, the existence of solutions with symmetry at least $H$ to equation $f(x)=0$ in $\Omega$, can be predicted. Although this approach provides a  way to determine the existence of solutions in $\Omega$, and even to distinguish their different orbit types, nevertheless, it comes at a price of elaborate $H$-fixed-point space computations which can be a rather challenging task.  

\medskip

\noi{\bf (b) Method.} Our method is based on the usage of the Brouwer equivariant degree theory 
(cf. \cite{AED, survey}; see also \cite{BDZ}).  
To be more explicit, the equivariant degree $\gdeg(f,\Omega)$ is an element of the free $\bz$-module $A_0(G)$ generated by the conjugacy classes $(H)$ of subgroups $H$ of $G$ with a finite Weyl group $W(H)$:
\begin{equation}\label{eq:gdeg}
\gdeg(f,\Omega)=\sum_{(H)} n_H\, (H), \quad n_H\in \bz,
\end{equation} 
where the coefficients $n_H$ are given by the following recurrence formula
\begin{equation}\label{eq:rec}
n_H=\frac{d_H-\sum_{(L)>(H)} n_L \,n(H,L)\, |W(L)|}{|W(H)|},
\end{equation}
and  $n(H,L)$ denotes the number of subgroups $L'$ in $(L)$ such that $H\le L'$ (see \cite{AED}).  Also, we use the  notation 
\begin{equation}\label{eq:coeffi}
\text{coeff}^{H}(a):= n_H \;\ \text{for any} \;\;  a = \sum_{(H)} n_H(H) \in A_0(G). 
\end{equation}
One can immediately recognize a connection 
between the two collections: $\{d_H\}$ and  $\{ n_H\}$, where $H \le  G$ and $W(H)$ is finite. 
As a matter of fact,  $\gdeg(f,\Omega)$ satisfies the standard properties expected from any topological degree.  
However, there is one additional functorial property, which plays a crucial role in computations, namely the {\it multiplicativity property}. In fact, $A_0(G)$ has a natural structure of a ring  (which is called the {\it Burnside ring} of $G$), where the multiplication  $\cdot:A_0(G)\times A_0(G)\to A_0(G)$   is defined on generators by $(H)\cdot (K)=\sum_{(L)} m_L\, (L)$ with 
\begin{equation}\label{eq:mult}
m_L:=|(G/H\times G/K)_{(L)}/G|, \quad \text{ where } W(L) \text{ is finite}.
\end{equation}
The multiplicativity property for two admissible $G$-pairs $(f_1,\Omega_1)$ and   $(f_2,\Omega_2)$ means the following equality
\begin{equation}\label{eq:mult-property}
 \gdeg(f_1\times f_2,\Omega_1\times \Omega_2)=  \gdeg(f_1,\Omega_1)\cdot  \gdeg(f_2,\Omega_2).
\end{equation}
Given a $G$-equivariant linear isomorphism $A : V \to V$, formula \eqref{eq:mult-property} combined with the equivariant spectral decomposition of $A$, reduces the computations of $\gdeg(A,B(V))$ to the computation of the so-called basic degrees $\deg_{\cV_i}$, 
which can  be `prefabricated'
 in advance for any group $G$ (here $\deg_{\cV_i}:=\gdeg(-\id, B(\cV_i))$ with $\cV_i$ being an irreducible $ G$-representation and $B(X)$ stands for the unit ball in $X$).  

With an eye towards making our presentation acceptable for non-experts in the equivariant degree theory, we first obtain the existence
result assuming  that system \eqref{eq:Lap} respects $O(2) \times \mathbb Z_2$-symmetries only.  To show the full power of the proposed method, we next consider system  \eqref{eq:Lap} in the presence of extra spacial symmetries assuming that $\mathbb R^k$ is equipped with the structure of $\Gamma$-representation respected by $\bm f$, in which case, we  establish minimal symmetries of the corresponding non-radial solutions. The obtained abstract result is supported by a numerical example with extra $\Gamma = S_4$-symmetries. In this case,
the  computations of the related Brouwer $O(2)\times \Gamma\times \bz_2$-equivariant degrees can be assisted by GAP computer programs (the GAP code is listed in Appendix \ref{sec:GAP}). Observe also that the method can be easily adopted to treat other symmetric domains.

\medskip

\section{Functional Spaces Reformulation and {\em a priori} Bounds}
Consider the Sobolev space $\mathscr H:=H_{0}^2(D,\br^k)$ equipped with the usual norm $\|u\|:=\max\{\|D^s u\|_{L^2}: |s|\le 2\}$, where 
$s=(s_1,s_2)$, $|s|=s_1+s_2$, and $D^s u=\frac{\partial ^{|s|}}{\partial^{s_1}x\partial^{s_2}y}$.
Define the linear operator $\mathscr L:\mathscr H\to L^2(D;\br^k)$ by 
\[
\mathscr Lu:=-\triangle u, \quad u\in \mathscr H.
\]
It is well known that $\mathscr L$ is an isomorphism, i.e., the inverse operator $\mathscr L^{-1}$ is well defined and bounded. 
\vs
Choose 
\begin{equation}\label{eq:const-q}
q > \max\{1,2\alpha\} 
\end{equation}
(for example, it is enough to take $q:= 2\beta$, cf. assumptions (A3) and (A4)). Denote by $j:\mathscr H\to L^q(D;\br^k)$
the standard Sobolev embedding.  Notice that under the assumption \ref{c4},  the function
\begin{equation}\label{eq:operator-N}
N(v)(z) := \bm f(z,v(z)), \quad z\in \overline D
\end{equation}
belongs to $L^2(D,\br^k)$  for any  $v\in L^q(D;\br^k)$. Indeed, consider   $v\in L^q(D;\br^k)$.  Then, combining \eqref{eq:const-q} and  
\ref{c4} with the H\"older inequality, one has:
\begin{align}
\|N(v)\|_{L^2}&\le a \|\, |v|^\alpha\|_{L^2} +  b \sqrt \pi  \nonumber\\
& =   a \left(\int_D|v|^{2\alpha}    \right)^{\frac 12} +  b \sqrt \pi \nonumber  \\
& \le a \pi^{{1 - 2\alpha/q} \over 2} 
\left(\int_D |v|^{\frac{2\alpha q}{2\alpha }}    \right)^{\frac {2\alpha} 2q} + b \sqrt \pi \nonumber\\
&= a \pi^{1/2-\alpha/q}\|v\|^\alpha_{L^q} + b \sqrt{\pi}.
\label{eq:acting}
\end{align}
\vs
Notice that system \eqref{eq:Lap} is equivalent to the equation
\begin{equation}\label{eq:s2-1}
\mathscr L u=N(j u),\quad u\in \mathscr H,
\end{equation}
which can also be written as 
\begin{equation}\label{eq:Lap1}
\mathscr F(u):=u-\mathscr L^{-1}N(j u)=0, \quad u\in \mathscr H,
\end{equation}
with a well defined nonlinear operator $\mathscr F:\mathscr H\to \mathscr H$.  
\vs

We will need the following lemmas.

\begin{lemma}\label{lem:a-priori} Let  $\bm f:\overline D\times \br^k\to \br^k$  be a continuous function satisfying the assumption \ref{c4}, then there exists a constant $R>0$ such that $\|u\|_{\mathscr H} < R$ for any solution $u \in \mathscr H$ to system \eqref{eq:Lap}.
\end{lemma}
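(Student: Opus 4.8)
The plan is to derive an a priori bound by reformulating \eqref{eq:Lap} as the fixed point equation \eqref{eq:Lap1} and using the estimate \eqref{eq:acting} together with boundedness of $\mathscr L^{-1}$ and of the Sobolev embedding $j$. The key observation is that assumption \ref{c4} forces the nonlinearity $N$ to have \emph{sublinear} growth on $L^q$, so the superposition $\mathscr L^{-1}\circ N\circ j$ maps $\mathscr H$ into $\mathscr H$ with growth of order $\alpha<1$ in $\|u\|_{\mathscr H}$; a solution must therefore lie in a ball whose radius is controlled a priori.

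First I would let $u\in\mathscr H$ be any solution of \eqref{eq:Lap}, equivalently of \eqref{eq:Lap1}, so that $u=\mathscr L^{-1}N(ju)$. Taking norms gives
\begin{equation*}
\|u\|_{\mathscr H}\le \|\mathscr L^{-1}\|\cdot\|N(ju)\|_{L^2}.
\end{equation*}
Next I would apply the estimate \eqref{eq:acting} with $v=ju\in L^q(D;\br^k)$, obtaining
\begin{equation*}
\|N(ju)\|_{L^2}\le a\,\pi^{1/2-\alpha/q}\|ju\|_{L^q}^{\alpha}+b\sqrt\pi.
\end{equation*}
Then, using boundedness of the embedding $j$, say $\|ju\|_{L^q}\le C_j\|u\|_{\mathscr H}$, I would combine these into
\begin{equation*}
\|u\|_{\mathscr H}\le \|\mathscr L^{-1}\|\Bigl(a\,\pi^{1/2-\alpha/q}C_j^{\alpha}\,\|u\|_{\mathscr H}^{\alpha}+b\sqrt\pi\Bigr)=: c_1\|u\|_{\mathscr H}^{\alpha}+c_2,
\end{equation*}
with explicit constants $c_1,c_2>0$ depending only on $a$, $b$, $\alpha$, $q$, $\|\mathscr L^{-1}\|$ and $C_j$.

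Finally, since $\alpha\in(0,1)$, the scalar inequality $t\le c_1 t^{\alpha}+c_2$ has only bounded nonnegative solutions: the function $t\mapsto t-c_1t^{\alpha}$ tends to $+\infty$ as $t\to\infty$, so there is $R>0$ (e.g.\ $R$ can be taken as any number exceeding the largest root of $t=c_1t^{\alpha}+c_2$) such that $t\le c_1t^{\alpha}+c_2$ implies $t<R$. Applying this with $t=\|u\|_{\mathscr H}$ yields $\|u\|_{\mathscr H}<R$, and since $R$ depends only on the data of $\bm f$ and not on the particular solution $u$, the lemma follows. The only mild subtlety — hardly an obstacle — is making the passage from the sublinear scalar inequality to the uniform bound $R$ precise; everything else is a direct chaining of \eqref{eq:acting} with the two operator norm bounds.
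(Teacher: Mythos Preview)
Your proof is correct and follows essentially the same strategy as the paper: both start from $u=\mathscr L^{-1}N(ju)$, derive a sublinear scalar inequality of the form $t\le c\,t^{\alpha}+d$ with $\alpha<1$, and conclude boundedness from the fact that $t-c\,t^{\alpha}\to+\infty$. The only cosmetic difference is that the paper estimates $\|N(ju)\|_{L^2}$ directly in terms of $\|u\|_{L^2}$ (via H\"older on the disc) and then uses $\|u\|_{L^2}\le\|u\|_{\mathscr H}$ to close the inequality in $\|u\|_{L^2}$ first, whereas you route through the $L^q$ estimate \eqref{eq:acting} and the embedding constant $C_j$ to obtain the scalar inequality directly in $\|u\|_{\mathscr H}$; the arguments are otherwise identical.
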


\begin{proof}  Assume that $u\in \mathscr H$ is a solution to \eqref{eq:Lap}, which implies that $u=\mathscr L^{-1}N(ju)$. Combining this
with $\|N(ju)\|_{L^2}\le a \pi ^{(1-\alpha)/2} \|u\|^\alpha_{L^2}+b\sqrt{\pi}$ (cf.  \ref{c4}) implies        
\begin{equation}\label{eq:est1}
\|u\|_{\mathscr H} \le a \pi ^{(1-\alpha)/2} \|\mathscr L^{-1}\|_{L^2\to \mathscr H} \, \|u\|_{L_2}^\alpha + b \sqrt \pi \|\mathscr L^{-1}\|_{L^2\to \mathscr H}.
\end{equation}
Combining \eqref{eq:est1} with $\|u\|_{\mathscr H} \ge \|u\|_{L^2}$ yields
\begin{equation}\label{eq:s2-3}
\|u\|_{L^2}\le c\|u\|^{\alpha}_{L^2}+d,
\end{equation}
where $c:= a \pi ^{(1-\alpha)/2}\|\mathscr L^{-1}\|_{L^2\to \mathscr H}$, $d:=b\sqrt{\pi}\|\mathscr L^{-1}\|_{L^2\to \mathscr H}$.
Then, since $0<\alpha<1$,  there exists $R_o>0$ such that $\psi(t):=t-ct^\alpha-d>0$ for $t\ge R_o$. Consequently, $\|u\|_{L^2}<R_o$, and by \eqref{eq:est1},
\[
\|u\|_{\mathscr H} \le  c\|u\|^{\alpha}_{L^2}+d< cR_o^\alpha+d=:R
\]
is the required constant.
\end{proof}  

\vs

We also define the linear operator $\mathscr A:\mathscr H\to \mathscr H$, by 
\[
\mathscr A(u)(z):=u-\mathscr L^{-1} Au(z), \quad u\in \mathscr H, \; z\in \overline D.
\]
\vs
\begin{lemma}\label{lem:deriv} Under the assumptions \ref{c3} and \ref{c4}, the nonlinear operator $\mathscr F : \mathscr H \to \mathscr H$ given by 
\eqref{eq:Lap1} is a completely continuous field  differentiable at $0\in \mathscr H$ with
$D\mathscr F(0)=\mathscr A$.
\end{lemma}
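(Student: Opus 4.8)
The plan is to verify the three assertions in turn: that $\mathscr F$ takes values in $\mathscr H$ and is of the form $\id - (\text{compact})$, that it is continuous, and that it is Fr\'echet differentiable at $0$ with derivative $\mathscr A$. First I would recall that the bound \eqref{eq:acting}, established under \ref{c4}, shows $N$ maps $L^q(D;\br^k)$ into $L^2(D;\br^k)$ and is bounded on bounded sets; continuity of $N$ as a Nemytskii operator $L^q \to L^2$ follows from the standard theory (using that $z \mapsto \bm f(z,\cdot)$ is continuous and the growth bound \ref{c4}, which controls the exponent since $q > 2\alpha$). Composing with the compact Sobolev embedding $j : \mathscr H \hookrightarrow L^q(D;\br^k)$ and the bounded operator $\mathscr L^{-1} : L^2 \to \mathscr H$, we get that $u \mapsto \mathscr L^{-1} N(ju)$ is continuous and maps bounded sets into relatively compact sets (the embedding $j$ being compact is the crucial point here, turning boundedness into precompactness), so $\mathscr F = \id - \mathscr L^{-1}N(j\,\cdot\,)$ is a completely continuous field. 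The operator $\mathscr A = \id - \mathscr L^{-1}A(\,\cdot\,)$ is likewise a compact perturbation of the identity, since $u \mapsto Au$ is bounded $\mathscr H \to L^2$ and $j$ is compact; in particular $\mathscr A$ is a bounded linear operator on $\mathscr H$.

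For the differentiability at $0$, I would show $\|\mathscr F(u) - \mathscr F(0) - \mathscr A u\|_{\mathscr H} = o(\|u\|_{\mathscr H})$ as $u \to 0$. Since $\mathscr F(0) = -\mathscr L^{-1}N(0)$ and $N(0)(z) = \bm f(z,0) = 0$ by the oddness \ref{c2}, we have $\mathscr F(0) = 0$, and
\[
\mathscr F(u) - \mathscr A u = -\,\mathscr L^{-1}\bigl(N(ju) - A\,(ju)\bigr).
\]
Thus it suffices to estimate $\|N(ju) - A(ju)\|_{L^2}$. By \ref{c3}, pointwise $|\bm f(z,ju(z)) - A\,ju(z)| \le c\,|ju(z)|^\beta$, so
\[
\|N(ju) - A(ju)\|_{L^2} \le c\,\bigl\||ju|^\beta\bigr\|_{L^2} = c\,\|ju\|_{L^{2\beta}}^\beta.
\]
Here I would choose the Sobolev exponent $q$ so that $q \ge 2\beta$ (the excerpt's suggested choice $q := 2\beta$ does exactly this, and is compatible with \eqref{eq:const-q} since $\beta > 1$), so that $L^q(D) \hookrightarrow L^{2\beta}(D)$ continuously on the bounded domain $D$; combined with the boundedness of $j : \mathscr H \to L^q$ this gives $\|ju\|_{L^{2\beta}} \le C\|u\|_{\mathscr H}$. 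Hence
\[
\|\mathscr F(u) - \mathscr A u\|_{\mathscr H} \le \|\mathscr L^{-1}\|_{L^2 \to \mathscr H}\, c\, C^\beta\, \|u\|_{\mathscr H}^\beta = O\bigl(\|u\|_{\mathscr H}^\beta\bigr),
\]
and since $\beta > 1$ this is $o(\|u\|_{\mathscr H})$, which is precisely $D\mathscr F(0) = \mathscr A$.

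The main obstacle, such as it is, is bookkeeping rather than conceptual: one must be careful to choose the intermediate exponent $q$ so that it simultaneously serves the continuity/compactness argument (any $q$ with $q > \max\{1,2\alpha\}$ and $q$ below the critical Sobolev exponent — which on a planar domain with $H^2$ embedding is no constraint, as $H^2_0(D) \hookrightarrow L^q(D)$ for all $q < \infty$) and the differentiability estimate (which wants $q \ge 2\beta$); taking $q = 2\beta$ resolves both. I would also note explicitly that $\mathscr H = H^2_0(D;\br^k) \hookrightarrow L^q(D;\br^k)$ compactly for every finite $q$ in dimension two (Rellich–Kondrachov), which is what makes the nonlinear term compact. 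Everything else — measurability and continuity of the Nemytskii operator, the Hölder step — is routine.
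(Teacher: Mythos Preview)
Your proof is correct and follows essentially the same route as the paper's: choose $q=2\beta$, use the Nemytskii/Krasnoselskii theory together with the Rellich--Kondrachov compact embedding to get complete continuity, and then use the pointwise bound from \ref{c3} to show $\|N(ju)-A(ju)\|_{L^2}=O(\|u\|^\beta)$, which gives $D\mathscr F(0)=\mathscr A$. One cosmetic point: you invoke \ref{c2} to get $\bm f(z,0)=0$, but the lemma only assumes \ref{c3} and \ref{c4}; note that \ref{c3} with $u=0$ already yields $|\bm f(z,0)|\le 0$, so the appeal to oddness is unnecessary.
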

\begin{proof}  Take $q := 2\beta$ (cf. conditions \ref{c3}, \ref{c4} and \eqref{eq:const-q}). Then, it follows from  \eqref{eq:acting} that 
the operator $N$ given by  \eqref{eq:operator-N} takes $L^q(D;\br^k)$ to $L^2(D;\br^k)$. Combining this with continuity of $\bm f$ and the classical Krasnoselskii's theorem (see \cite{Kra1,Kra2}),  one obtains that the operator $N$ is continuous. Also, by the Rellich-Kondrachov Theorem (see, for instance, \cite{Brezis}, Theorem 9.16 and Remark 20, p. 290), $H^1_0(D; \mathbb R)$ is compactly embedded into $L^q(D;\mathbb R)$.
This together with continuous embedding of $H^2_0(D; \mathbb R)$ into $H^1_0(D; \mathbb R)$ implies a compact embedding of $\mathscr H$ into $L^q(D;\mathbb R^2)$. Thus, $\mathscr F$ is a completely continuous vector field.

Let us show that $N : L^q(D;\mathbb R^2) \to L^2(D;\mathbb R^2)$ is differentiable at the origin and the derivative coincides with the
operator generated by $A$. In fact,
\begin{align*}
\frac{1}{\|u\|_{L^q}^2}\int_D|\bm f(z,u(z))-Au(z)|^2dz&\le  \frac{c^2}{\|u\|_{L^q}^2}\int_D |u(z)|^{2\beta}dz\\
&= \frac{c^2}{\|u\|_{L^q}^2}\|u\|_{L^q}^q= c^2\|u\|_{L^q}^{2(\beta-1)},
\end{align*}
which implies that 
\[
\lim_{\|u\|_{L^q}\to 0}
\frac{\|Nu-Au\|_{L^2}}{\|u\|_{L^q}}\le c^2\lim_{\|u\|_{L^q}\to 0} \|u\|^{\beta-1}_{L^q}=0.
\]
Combining this with boundedness of the linear operators $j$ and $ \mathscr L^{-1}$ completes the proof of the lemma.
\end{proof}

\section{Existence Results}  By passing to polar coordinates, one can easily compute the spectrum of the operator $\mathscr L$ (considered as an unbounded operator in $L^2(D;\br^k)$. Namely, denote by $s_{nm}$ the $n$-th positive zero of the Bessel function $J_m$ 
of the first kind.
Define 
\[
\sigma(\mathscr L)=\{s_{nm}: n\in \bn, \; m=0,1,2,\dots\}.
\]
Then, for each eigenvalue $s_{nm}$, the corresponding eigenspace can be described as follows  (here we use the standard polar coordinates $(r,\theta)$):

\[
\mathscr E_{nm}:=\left\{J_m(s_{nm}r)\Big(\cos(m\theta)\vec a+\sin(m\theta)\vec b\Big): \vec a,\, \vec b\in \br^k\right\}.
\]
Put
\[
\mathscr H_m:=\overline{\bigoplus_{n=1}^\infty \mathscr E_{nm}}\quad \text{ and }\quad \mathscr A_m:=\mathscr A|_{\mathscr E_m}, 
\]
where the closure is taken in $\mathscr H$.
It is clear that $\mathscr A(\mathscr E_{nm})\subset \mathscr E_{nm}$, so 
$\mathscr A_{nm}:=\mathscr A|_{\mathscr E_{nm}}:\mathscr E_{nm}\to\mathscr E_{nm}$ and consequently $\mathscr A_m : \mathscr H_m\to \mathscr H_m$. 
\vs

Let us denote by $\sigma(A)$ the real spectrum of the matrix $A$ and by $\sigma_+(A)$ its positive spectrum.
We make an additional ``non-degeneracy'' assumption:
\begin{itemize}
\item[(D)] For all $\mu\in \sigma(A)$ and $n\in \bn$ and $m=0,1,2,\dots$, we have $s_{nm}\not=\mu$.
\end{itemize}
The assumption (D) implies that $\mathscr A$ is an isomorphism. Indeed, the real spectrum $\sigma(\mathscr A)$ can be easily described:
\[
\sigma(\mathscr A)=\bigcup_{m=0}^\infty \sigma(\mathscr A_m) \quad \text{ where } \; \sigma(\mathscr A_m)=\left\{ 1-\frac{\mu}{s_{nm}}: \mu\in \sigma(A),\; n\in \bn    \right\}.
\]
For every $\mu\in \sigma_+(A)$ and $m = 0,1,2,3,\dots$, 
put 
\begin{equation}\label{eq:n-m}
\mathfrak n_m(\mu):=\left|  \{(n,m): n\in \bn, \; s_{nm}<\mu\}\right|,
\end{equation}
where $|X|$ stands for the cardinality of the set $X$. As is well known 
(see, for example, \cite{Watson}, p. 486), $s_{1m} > \sqrt{m(m+2)}$, from which it follows that the numbers $\mathfrak n_m$ are non-zero only for finitely many $m=1,2,3\dots$ 
We also put
\begin{equation}\label{eq:mathfrak n-m} 
\mathfrak m_m:= \sum_{\mu\in \sigma_{+}(A)} \mathfrak n_m(\mu)\cdot m(\mu),
\end{equation}
where $m(\mu)$ stands for the algebraic multiplicity of the eigenvalue $\mu$. 
\vs

Now we can formulate our main existence result.
\begin{theorem}\label{th:main1}
Under the assumptions \ref{c1}--\ref{c4} and (D), assume that there exists $m>0$ such that $\mathfrak m_m$ is odd. Then, system \eqref{eq:Lap} admits a non-radial solution.
\end{theorem}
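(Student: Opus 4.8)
The plan is to compute the $G$-equivariant Brouwer degree of the completely continuous field $\mathscr F$ of \eqref{eq:Lap1} on two nested balls of $\mathscr H$, where $G:=O(2)\times\bz_2$ acts with $O(2)$ acting on $D$ (hence on maps by $(g\cdot u)(z)=u(g^{-1}z)$) and the nontrivial element of $\bz_2$ acting by $u\mapsto-u$; by \ref{c1}--\ref{c2} the field $\mathscr F$ is $G$-equivariant. By Lemma \ref{lem:a-priori} every zero of $\mathscr F$ lies in $B_R:=\{u\in\mathscr H:\|u\|<R\}$, so $\gdeg(\mathscr F,B_R)$ is well defined; the same a~priori estimate applied to the admissible $G$-homotopy $h_t(u):=u-(1-t)\mathscr L^{-1}N(ju)$ shows $h_t$ has no zeros on $\partial B_R$, whence $\gdeg(\mathscr F,B_R)=\gdeg(\id,B_R)=(G)$. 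On the other hand, Lemma \ref{lem:deriv} together with (D) makes the origin an isolated zero of $\mathscr F$ with invertible derivative $\mathscr A=D\mathscr F(0)$, so $\gdeg(\mathscr F,B_\rho)=\gdeg(\mathscr A,B_\rho)$ for all small $\rho>0$, and by additivity and excision
\begin{equation}\label{eq:prop-ann}
\gdeg\bigl(\mathscr F,B_R\setminus\overline{B_\rho}\bigr)=(G)-\gdeg(\mathscr A,B_\rho).
\end{equation}

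The core computation is that of $\gdeg(\mathscr A,B_\rho)$. Since $\mathscr A$ preserves each $\mathscr E_{nm}$, it is $G$-homotopic through isomorphisms to the field equal to $-\id$ on the direct sum of those $\mathscr E_{nm}$ on which $\mathscr A$ is negative definite (equivalently $s_{nm}<\mu$ for some $\mu\in\sigma_+(A)$) and to $\id$ elsewhere; by $s_{1m}>\sqrt{m(m+2)}$ this negative subspace is finite-dimensional, and its mode-$m$ part consists of $\mathfrak m_m$ copies (with algebraic multiplicities, cf. \eqref{eq:n-m}, \eqref{eq:mathfrak n-m}) of the irreducible $G$-representation $\cV_m$ carried by an $\br^k$-block of $\mathscr E_{nm}$. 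Hence, by the multiplicativity property \eqref{eq:mult-property},
\[
\gdeg(\mathscr A,B_\rho)=\prod_{m\ge 0}\bigl(\deg_{\cV_m}\bigr)^{\mathfrak m_m},
\]
where, for $m\ge1$, $\cV_m$ is the mode-$m$ representation of $O(2)$ twisted by the sign representation of $\bz_2$. The representation $\cV_m$ has exactly two orbit types, the full $(G)$ and the ``twisted dihedral'' type $(H_m)$, with $H_m$ the isotropy of a generic vector --- generated by $\bz_m$, the reflection $\theta\mapsto-\theta$, and the element $(e^{i\pi/m},-1)$ --- and the recurrence \eqref{eq:rec} (using $|W(H_m)|=2$) gives $\deg_{\cV_m}=(G)-(H_m)$, while the $m=0$ factor involves only $(G)$ and the radial type $(O(2)\times\bz_1)$. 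Since $\bz_2$ is central in $G$ and $H_m$ contains an element with nontrivial $\bz_2$-component, no conjugate of $H_m$ is contained in $O(2)\times\bz_1$ --- which is precisely the isotropy group of every nonzero radial function.

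It remains to produce a non-radial orbit type on which the right-hand side of \eqref{eq:prop-ann} has a nonzero coefficient. I would fix a mode $m>0$ with $\mathfrak m_m$ odd (it is convenient to take $m$ maximal with this property) and chase $\text{coeff}^{H_m}$. Expanding $\gdeg(\mathscr A,B_\rho)=\prod_{m'}((G)-(H_{m'}))^{\mathfrak m_{m'}}$ and using the Burnside multiplication \eqref{eq:mult} --- in particular $(H_m)\cdot(H_m)=2(H_m)+(\text{strictly smaller orbit types})$ --- one finds that the factor $((G)-(H_m))^{\mathfrak m_m}$ contributes $-\mathfrak m_m$ to $\text{coeff}^{H_m}$; the heart of the argument is then to check that the remaining factors contribute an even amount, so that $\text{coeff}^{H_m}(\gdeg(\mathscr A,B_\rho))\equiv\mathfrak m_m\equiv1\pmod2$. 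Granting this, $\text{coeff}^{H_m}(\gdeg(\mathscr F,B_R\setminus\overline{B_\rho}))\ne0$, so by the existence property of the equivariant degree $\mathscr F$ has a zero $u_0\in B_R\setminus\overline{B_\rho}$ whose isotropy group contains a conjugate of $H_m$; by the last remark of the previous paragraph $u_0$ is non-radial, and it solves \eqref{eq:Lap} since \eqref{eq:Lap} and \eqref{eq:Lap1} are equivalent. The main obstacle is precisely this last, purely algebraic, step: controlling the Burnside-ring interaction between distinct isotypic components and pinning down the parity of $\text{coeff}^{H_m}(\gdeg(\mathscr A,B_\rho))$; all the analytic input --- the a~priori bound, complete continuity and differentiability of $\mathscr F$, and the spectral description of $\mathscr A$ --- is already in place (Lemmas \ref{lem:a-priori}, \ref{lem:deriv} and the preceding discussion), and the basic degrees $\deg_{\cV_m}$ are a prefabricated ingredient (obtained, for the enlarged symmetry group of the numerical example, from the GAP code listed in the appendix).
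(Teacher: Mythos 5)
Your analytic framework and degree bookkeeping coincide with the paper's: the two admissible $G$-homotopies (to $\id$ on $B_R(0)$ via Lemma \ref{lem:a-priori}, to $\mathscr A$ on $B_\ve(0)$ via Lemma \ref{lem:deriv} and (D)), the annulus $\Omega=B_R(0)\setminus\overline{B_\ve(0)}$, the multiplicativity formula $\gdeg(\mathscr A,B_\ve(0))=\gdeg(\mathscr A_0,B(\mathscr E_0))\cdot\prod_{m\ge1}(\deg_{\cV^-_m})^{\mathfrak m_m}$, and the identification $\deg_{\cV^-_m}=(G)-(D_{2m}^d)$ with $(D_{2m}^d)$ a non-radial orbit type. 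The step you yourself label ``the heart of the argument'' and ``the main obstacle'' --- that all factors with $m'\neq m$ contribute evenly to $\text{coeff}^{H_m}$ --- is, however, a genuine gap, and in the form you state it the claim is doubtful. Even if $m$ is maximal with $\mathfrak m_m$ odd, there may be $m''$ with $m\mid m''$, $m''/m$ odd and $\mathfrak m_{m''}>0$ even; then $(D_{2m}^d)$ is subconjugate to $(D_{2m''}^d)$, and although $\bigl((G)-(D_{2m''}^d)\bigr)^{\mathfrak m_{m''}}$ has zero coefficient at $(D_{2m''}^d)$, its square equals $(G)$ plus lower-order terms whose coefficient at $(D_{2m}^d)$ is produced by the recurrence \eqref{eq:rec} and has no a priori parity; cross-products between distinct such $m''$ cause the same problem. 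So chasing $\text{coeff}^{H_m}$ at the mode $m$ itself need not close the argument.

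The paper avoids this by weakening the target: it claims only that $\text{coeff}^{D_{2m'}^d}\bigl(\gdeg(\mathscr A,B(\mathscr H))\bigr)\neq 0$ for \emph{some} $m'$ with $m\mid m'$, and the mechanism (Section \ref{sec:app4.3}, Lemma \ref{lem:same-maximal}) is to work at a \emph{maximal} orbit type $(H_o)$ among those occurring: by maximality, no factor whose basic degree omits $(H_o)$, nor any product of such factors, can reach $(H_o)$; and any two factors that do carry $(H_o)$ multiply to $(G)$ plus terms with zero coefficient at $(H_o)$, because $-2x_o+x_o^2|W(H_o)|=0$ when $x_o=2/|W(H_o)|$. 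An odd number of such factors therefore leaves exactly one surviving $\pm x_o(H_o)$. That pairing-and-cancellation lemma, together with the passage from $(H_m)$ to a suitable $(H_{m'})$, is the ingredient missing from your proposal. A smaller arithmetic point: the contribution of $\bigl((G)-(H_m)\bigr)^{\mathfrak m_m}$ to $\text{coeff}^{H_m}$ is $-1$ for $\mathfrak m_m$ odd and $0$ for $\mathfrak m_m$ even (since $\bigl((G)-(H_m)\bigr)^2=(G)+\text{lower terms}$), not $-\mathfrak m_m$; this is harmless mod $2$ but matters if you track exact coefficients through the product.
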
 
\begin{proof}  Notice that the group $G:=O(2)\times \bz_2$ acts naturally on the space $\mathscr H$:
\begin{equation}\label{eq:G-action}
(h,\pm1)(u)(z):= \pm u(h^{-1}z), \quad (h \in O(2), \; \mathbb Z_2 = \{\pm 1\}, \; z \in D),
\end{equation}
and the nonlinear operator $\mathscr F$ is $G$-equivariant. By assumption (D),  the linear operator $\mathscr A:\mathscr H\to \mathscr H$ is an isomorphism,  and by Lemma \ref{lem:deriv},  there exists an $\ve>0$ such that $\mathscr F$ is $B_\ve(0)$-admissibly $G$-equivariantly homotopic to $\mathscr A$ (here $B_\ve(0)$ stands for the disc in $\mathscr H$ of radius $\ve$ centered at the origin). Similarly, by Lemma \ref{lem:a-priori} the nonlinear operator $\mathscr F$ is  $B_R(0)$-admissibly $G$-equivariantly homotopic to $\id$. Put, $\Omega:=B_R(0)\setminus \overline{B_\ve(0)}$. Then, by the additivity property of the Brouwer $G$-equivariant degree, we have 
\begin{align}
\gdeg(\mathscr F,\Omega)&=\gdeg(\mathscr F,B_R(0))-\gdeg(\mathscr F,B_\ve(0)) \nonumber\\
&=\gdeg(\id, B_R(0))-\gdeg(\mathscr A,B_\ve(0))=(G)-\gdeg(\mathscr A,B_\ve(0))  \nonumber\\
&=(G)-\prod_{m=0}^\infty \gdeg(\mathscr A_m,B(\mathscr H_m))\nonumber\\
&=(G)-\gdeg(\mathscr A_0,B(\mathscr E_0))\cdot \prod_{m=1}^\infty (\deg_{\mathcal V^-_m})^{\mathfrak m_m},\label{eq:product-basic}
\end{align}
where $\deg_{\mathcal V^-_m}:=\gdeg(-\id,B(\mathcal V_m^{-}))$ 
and $\mathcal V^-_m$ stands for the $m$-th irreducible $G$-representation. Notice that $\gdeg(\mathscr A_0,B(\mathscr E_0))$ is equal either to $(G)$ or to $(G)-(O(2))$ and 
$\deg_{\mathcal V^-_m}=(G)-(D_{2m}^d)$ for $m\in \bn$.  It follows directly from the properties of the Burnside ring multiplication that if $\mathfrak m_m$ is odd, then $\gdeg(\mathscr F,\Omega)\not=0$. More precisely, there exists an $m'\in \bn$ such that $m$ divides $m'$ and coeff$^{D_{2m'}^d}(\gdeg(\mathscr A,B(\mathscr H)) \not=0$ (for more details pertinent to this fact, we refer to Section \ref{sec:app4.3}, where a more involved symmetric setting is discussed). Therefore, by the existence property of the Brouwer equivariant degree, there exists a non-zero solution $u$ to the equation $\mathscr F(u)=0$ such that $G_u\ge D_{2m'}^d$. Clearly, such a solution $u$ is not a radial solution.
\end{proof} 

\vs
Let us point out that by applying an appropriate $H$-fixed point reduction, it is possible to remove the condition (D) from Theorem \ref{th:main1}. To be more precise, put
\begin{equation*}
\mathscr C:=\{m\in \bz_+: \exists_{n\in \bn}\; \exists_{\mu\in \sigma(A)} \;\text{ such that }\; s_{nm}=\mu\}.
\end{equation*}
Since $\mathscr A$ is a Fredholm operator, the set $\mathscr C$ is finite. Put $H_l:=\bz_{2l}^d$. Then, 
\[
\mathscr H^{H_l} = \overline{\bigoplus_{m=1}^\infty  \mathscr H_{(2m-1)l}}.
\]
Assume that there exists  $l\in \bn$ such that 
\begin{equation}\label{eq:s3-1}
\mathscr C\cap \{ (2m-1)l: m\in \bn\}=\emptyset.
\end{equation}
Then, $\mathscr A^{H_l}:=\mathscr A|_{\mathscr E^{H_l}}:\mathscr H^{H_l}\to \mathscr H^{H_l}$ is an isomorphism and we obtain the following theorem.
\vs

\begin{theorem}\label{th:main2}
Under the assumptions \ref{c1}--\ref{c4}, let $l\in \bn$ be an integer such that  \eqref{eq:s3-1} is satisfied. If there exists $m\in \bn$ such that $\mathfrak m_{(2m-1)l}$ is odd, then system \eqref{eq:Lap} admits a non-radial solution.
\end{theorem}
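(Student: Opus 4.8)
The strategy is to rerun the proof of Theorem~\ref{th:main1} essentially verbatim, but carried out inside the fixed-point subspace $\mathscr V:=\mathscr H^{H_l}$, $H_l=\bz_{2l}^d$, instead of in all of $\mathscr H$. Set $W:=W(H_l)=N_G(H_l)/H_l$ (in fact $H_l\trianglelefteq G$, so $W=G/H_l$); then $W$ acts on $\mathscr V$, and since $\mathscr F$ is $G$-equivariant we have $\mathscr F(\mathscr V)\subseteq\mathscr V$, so that $\mathscr F^{H_l}:=\mathscr F|_{\mathscr V}\colon \mathscr V\to\mathscr V$ is a $W$-equivariant map. Restriction to the closed $W$-invariant subspace $\mathscr V$ preserves every ingredient of Theorem~\ref{th:main1}: by Lemma~\ref{lem:deriv}, $\mathscr F^{H_l}$ is a completely continuous field, differentiable at $0$ with $D\mathscr F^{H_l}(0)=\mathscr A^{H_l}:=\mathscr A|_{\mathscr V}$; by Lemma~\ref{lem:a-priori}, every solution of $\mathscr F^{H_l}(u)=0$ satisfies $\|u\|_{\mathscr H}<R$; and, by hypothesis \eqref{eq:s3-1} (as already noted before the statement), $\mathscr A^{H_l}$ is an isomorphism of $\mathscr V$. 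Finally, since $0\notin\{(2m-1)l:m\in\bn\}$, the description $\mathscr V=\overline{\bigoplus_{m=1}^{\infty}\mathscr H_{(2m-1)l}}$ shows that no nonzero $u\in\mathscr V$ has a nonzero radial ($m=0$) component; as the radial functions form exactly $\mathscr H_0$, every nonzero $u\in\mathscr V$ is non-radial. (Equivalently, a nonzero radial $u$ would have $G_u\supseteq O(2)\times\{1\}$, which together with $G_u\supseteq H_l$ forces $G_u=G$, i.e.\ $u=0$.)

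With these facts in hand I would repeat the degree computation of Theorem~\ref{th:main1} at the level of $W$ and $\mathscr V$. Choosing $\ve>0$ small and $R$ as in Lemma~\ref{lem:a-priori}, and using the two admissible $W$-homotopies $\mathscr F^{H_l}\simeq\mathscr A^{H_l}$ on $B_\ve(0)\cap\mathscr V$ and $\mathscr F^{H_l}\simeq\id$ on $B_R(0)\cap\mathscr V$ (the restrictions to $\mathscr V$ of the homotopies used in Theorem~\ref{th:main1}), the additivity property of the $W$-equivariant Brouwer degree yields, on $\Omega':=\bigl(B_R(0)\setminus\overline{B_\ve(0)}\bigr)\cap\mathscr V$,
\[
\eqdeg{W}(\mathscr F^{H_l},\Omega')=(W)-\eqdeg{W}(\mathscr A^{H_l},B(\mathscr V)).
\]
Next, the equivariant spectral decomposition of $\mathscr A^{H_l}$ together with the multiplicativity property reduces the last term to a product of basic degrees over the blocks $\mathscr H_{(2m-1)l}$: after identifying $W$ and the $W$-module structure of each block (which is a multiple of a single irreducible $W$-representation $\cW_m$), one obtains
\[
\eqdeg{W}(\mathscr A^{H_l},B(\mathscr V))=\prod_{m=1}^{\infty}\bigl(\deg_{\cW_m}\bigr)^{\mathfrak m_{(2m-1)l}},
\]
with exponents the integers $\mathfrak m_{(2m-1)l}$ of \eqref{eq:mathfrak n-m} (there is no $m=0$ factor, since $\mathscr H_0\cap\mathscr V=\{0\}$) and with each basic degree of the form $\deg_{\cW_m}=(W)-(K_m)$ for an explicit dihedral-type subgroup $K_m\le W$.

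If $\mathfrak m_{(2m-1)l}$ is odd for some $m\in\bn$, then precisely the Burnside-ring arithmetic used in the proof of Theorem~\ref{th:main1}, and spelled out in Section~\ref{sec:app4.3}, shows that the product above is $\ne (W)$; more precisely, there is a dihedral-type subgroup $L\le W$ with $\text{coeff}^{L}\bigl(\eqdeg{W}(\mathscr A^{H_l},B(\mathscr V))\bigr)\ne 0$, whence $\eqdeg{W}(\mathscr F^{H_l},\Omega')\ne 0$. By the existence property of the equivariant degree there is then a zero $u\in\Omega'$ of $\mathscr F^{H_l}$, hence of $\mathscr F$; since $u\in\mathscr V\setminus\{0\}$, it is a non-radial solution of \eqref{eq:Lap}. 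I expect the only genuine difficulty to be organizational rather than conceptual: pinning down the Weyl group $W(H_l)$ together with the induced $W(H_l)$-isotypical decomposition of $\mathscr H^{H_l}$ and the corresponding basic degrees, and then re-running the non-cancellation argument in $A_0(W(H_l))$ --- that combinatorial step, which is exactly what is deferred to the appendix in the unreduced case, is where the real work lies.
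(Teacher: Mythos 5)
Your proposal is correct and follows essentially the same route as the paper, whose proof of this theorem is literally ``repeat the arguments of Theorem \ref{th:main1}'' after the $H_l$-fixed-point reduction set up in the preceding paragraph; you simply make that reduction explicit by passing to the Weyl group $W(H_l)$ acting on $\mathscr H^{H_l}=\overline{\bigoplus_m \mathscr H_{(2m-1)l}}$ and rerunning the degree computation there. Your added observation that every nonzero element of $\mathscr H^{H_l}$ is automatically non-radial (since the $m=0$ modes are absent) is a valid and slightly cleaner way to finish than tracking the isotropy $D_{2m'}^d$ as in Theorem \ref{th:main1}.
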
 
\begin{proof} One needs to repeat the same  arguments that were  presented in the proof of Theorem~\ref{th:main1}.
\end{proof}
\medskip
\vs

\vs
\section{Semilinear Elliptic Systems on a Disc with Additional Symmetries}\label{sec:app4.3} 
\noindent
{\bf (a) Symmetrically Interacting Systems.}  Let us consider a system composed of $n$-coupled identical  systems of type \eqref{eq:Lap}\, which we can write as follows:
\begin{equation}\label{eq:Lap-symm1}
\begin{cases}
-\triangle u_1=f(z,u_1) + g_1(u_1,u),\\
-\triangle u_2=f(z,u_2) + g_2(u_2,u),\\
\vdots\\
-\triangle u_n=f(z,u_n) + g_n(u_n,u),\\
u_1|_{\partial D}=u_2|_{\partial D}=\dots =u_n|_{\partial D}=0,
\end{cases}
\end{equation}
where $f$ is similar to $\bm f$ in \eqref{eq:Lap}, $u_j(z)\in \br^s$, $u=(u_1,u_2,\dots, u_n)$ and the functions $g_j(u_j,u)$ describe the interaction 
between the $j$-th function $u_j$ and other functions. As an example, consider a configuration of $8$ such systems coupled in a cube fashion (see figure below). 
Then system \eqref{eq:Lap-symm1} can be written as:
\begin{equation}\label{eq:Lap-symm2}
\hskip4cm\begin{cases}
-\triangle u_1=f(z,u_1)+g(u_2-u_1,u_4-u_1,u_6-u_1),\\
-\triangle u_2=f(z,u_2)+g(u_1-u_2,u_3-u_2,u_7-u_2)\\
-\triangle u_3=f(z,u_3)+g(u_2-u_3,u_4-u_3,u_8-u_3),\\
-\triangle u_4=f(z,u_4)+g(u_3-u_4,u_5-u_4,u_6-u_4)\\
 -\triangle u_5=f(z,u_5)+g(u_4-u_5,u_6-u_5,u_8-u_5)\\
 -\triangle u_6=f(z,u_6)+g(u_1-u_6,u_5-u_6,u_7-u_6)\\
 -\triangle u_7=f(z,u_7)+g(u_2-u_7,u_6-u_7,u_8-u_7),\\
 -\triangle u_8=f(z,u_8)+g(u_3-u_8,u_5-u_8,u_7-u_8),\\\
u_1|_{\partial D}=u_2|_{\partial D}=\dots =u_8|_{\partial D}=0,
\end{cases}
\end{equation}
where the continuous function $g:\br^s\times \br^s\times \br^s\to \br^s$ is such that $g(x_1,x_2,x_3)=g(x_2,x_3,x_1) = g(x_1,x_3,x_2)$ for all $x_1,x_2,x_3\in \br^k$ and $g(0,0,0)=0$. We will also assume that $g$ is differentiable at $(0,0,0)$, thus there exists an $s\times s$-matrix $B$ such that 
\[
Dg(0,0,0)(x_1,x_2,x_3)=B(x_1+x_2+x_3),\quad  x_1,\, x_2,\,  x_3\in \br^s.
\]  
\vs
Keeping the exemplary system \eqref{eq:Lap-symm2} in mind, assume that $\Gamma$ is a subgroup of the permutation group $S_k$ acting on $V:=\br^k$ by permuting the coordinates of vectors in $\br^k$. Consider system \eqref{eq:Lap} assuming, in addition to conditions \ref{c1}--\ref{c4}, that the following condition is satisfied:
  \begin{enumerate}[label=($B_\arabic*$)]
\item\label{d5} the function $\bm f$ is $\Gamma$-equivariant, i.e. for every $\gamma\in \Gamma$ and $x\in V$, $\bm f(z, \gamma x)=\gamma \bm f(z,x)$ (for all $z\in D$).
	\end{enumerate}
We are interested in extending Theorems \ref{th:main1} and \ref{th:main2} to the symmetric setting providing an additional information on symmetric properties of non-radial solutions.

\medskip
\noi {\bf (b) Equivariant Setting in Functional Spaces.} Put $G:= O(2)\times \Gamma\times \bz_2$ and consider the Hilbert $G$-representation 
$\mathscr H:=H_o^2(D;V)$, with the $O(2) \times \mathbb Z_2$-action given by \eqref{eq:G-action} and the
$\Gamma$-action given by $(\gamma u)(z):=\gamma u(z)$, $\gamma\in \Gamma$, $z\in D$, $u\in \mathscr H$. 
Then, the operator $\mathscr F$ given by \eqref{eq:Lap1} is $G$-equivariant.
\vs
Consider the $\Gamma$-isotypic decomposition of $V$
\[
V=V_0\oplus V_1\oplus V_2\oplus \dots \oplus V_{\mathfrak r}, 
\]
where the $\Gamma$-isotypic component $V_j$ is modeled on the irreducible $\Gamma$-representation $\cU_j$. We also put
\begin{equation}\label{eq-m-j-mult}
m_j:=\text{dim\,} V_j/\text{dim\,} \cU_j, \quad j\in \{0,1,\dots,\mathfrak r\}.
\end{equation}
To simplify the computations, we introduce an additional condition (cf. condition \ref{c3}): 
\vs
\begin{itemize}
\item[($B_2$)] For every  $j\in \{0,1,2,\dots , \mathfrak r\}$, $A|_{V_j} =\mu_j\, \id_{V_j}$, for some $\mu_j\in \br$.
\end{itemize}

\vs 
The $G$-isotypic decomposition of $\mathscr H$ can be easily constructed. For every $j\in \{0,1,2,\dots , \mathfrak r\}$ and $(n,m)\in \bn\times \bn\cup \{0\}$  we put 
\[
\mathscr E^j_{nm}:=\left\{J_m(s_{nm}r)\Big(\cos(m\theta)\vec a+\sin(m\theta)\vec b\Big): \vec a,\, \vec b\in V_j\right\},
\]
and 
\[
\mathscr H_{m,j}:=\overline{\bigoplus_{n=1}^\infty \mathscr E^j_{nm}}.
\]
Then, we obtain the following $G$-isotypic decomposition of $\mathscr H$:
\begin{equation}\label{eq:isoH}
\mathscr H= \bigoplus _{j=0}^{\mathfrak r} \overline{\bigoplus_{m=0}^\infty \mathscr H_{m,j}}
\end{equation}
(here the $G$-isotypic component $\mathscr H_{m,j}$ is modeled on the irreducible $G$-representation $\cV^-_{m,j}=\cW_m\otimes \cU_j^-$, where $\cW_m$ stands for the $m$-th irreducible $O(2)$-representation and $\cU_j^-$ is the representation $\cU_j$ with the antipodal $\bz_2$-action). 

\vs

In order to formulate and prove one of our main results in the symmetric setting, let us discuss some additional properties of the basic degrees. To begin with,  take $\nu \in \mathbb N$ and define the Lie group homomorphism 
$\psi_\nu: O(2)\times \Gamma\times \bz_2\to O(2)\times \Gamma\times \bz_2$ by
\begin{equation}\label{eq:app4.3-fold}
\psi_\nu(e^{i\theta},\gamma,\pm 1)=(e^{i\nu\theta},\gamma, \pm 1), \quad \psi_\nu(e^{i\theta}\kappa,\gamma, \pm 1)=(e^{i\nu\theta}\kappa,\gamma,\pm 1)
\end{equation} 
(sometimes $\psi_\nu$ is called a {\it $\nu$-folding} homomorphism). 
This homomorphism induces the Burnside ring homomorphism $\Psi_\nu:A_0(G)\to A_0(G)$, where for a generator  $(H)\in \Phi_0(G)$, one has 
\begin{equation}\label{eq:nu-fold-hom}
\Psi_\nu(H)=(H_\nu), \quad H_\nu = \psi_\nu^{-1}(H).
\end{equation}
In particular, for any $j=\{0,1,\dots, \mathfrak r\}$ and  $m\ge 0$,  one has 
\begin{equation}\label{eq:folding-basic-degree}
\Psi_\nu\big(\deg_{\cV^-_{m,j}}  \big) = \deg_{\cV^-_{\nu m,j}}.
\end{equation}
\vs
\begin{remark}\label{rem-non-maximal}
{\rm If $(H)$ is a maximal orbit type in $\mathscr  H \setminus \{0\}$, then necessarily one has that $(H)$ is an orbit type in
$\mathscr  H_{0,j} \setminus \{0\}$ for some $j \in \{1,...,\mathfrak r\}$.  Hence, in this case, $(H)$ is an orbit type of a radially symmetric map (by the same token, $\Phi_\nu(H) = (H)$). Hence, there do {\it not} exist maximal orbit types in  $\mathscr  H \setminus \{0\}$ corresponding to a non-radial map. This suggests to consider maximal orbit types in $\mathscr  H_{m,j} \setminus \{0\}$ with $m > 0$.}
\end{remark}

For $m > 0$, denote by $\mathfrak M_m$  the set of all maximal orbit types $(H)$ in $\Phi_0(G; \mathscr H_m\setminus\{0\})$. 
Clearly, $\Psi_\nu(\mathfrak M_m) = \mathfrak M_{m\nu}$ for any $\nu \in \mathbb N$ (in particular, 
$\Psi_\nu(\mathfrak M_1) = \mathfrak M_\nu)$. Being motivated by Remark \ref{rem-non-maximal} and formula \eqref{eq:product-basic}, and in order to simplify our exposition, consider the isotypic decomposition 
\begin{equation}\label{eq:isotyp-H-1}
\mathscr H_1:=\mathscr H_{1,0}\oplus \mathscr H_{1,1}\oplus \dots\oplus \mathscr H_{1,\mathfrak r}. 
\end{equation}
Decomposition \eqref{eq:isotyp-H-1} together with formula \eqref{eq:nu-fold-hom} (see also \eqref{eq:coeffi}, \eqref{eq-m-j-mult} and condition (B$_2$)) allow us to refine formulas \eqref{eq:n-m} and \eqref{eq:mathfrak n-m} as follows: for a given $(H)\in \mathfrak M_1$ and $\nu\in \bn$, 
define
\begin{equation}\label{eq:n-m-equivar}
\mathfrak n_j(H_\nu) := \left| \{n\in \bn: s_{n\nu}<\mu_j,\; \text{coeff}^{H_\nu}(\deg_{\cV^-_{\nu,j}})\not=0\}     \right|, 
\end{equation}
and
\begin{equation}\label{eq:mathfrak n-m-equivar}
\mathfrak m(H_\nu) :=\sum_{j=0}^{\mathfrak r} \mathfrak n_j(H_\nu) m_j.
\end{equation}
We are now in a position to formulate our first main result in the symmetric setting.
\begin{theorem}\label{th:main1-3}
Under the assumptions \ref{c1}--\ref{c4}, (D), (B$_1$)--(B$_2$), suppose that there exist $(H)\in \mathfrak M_1$ and  $\nu > 0$ such that $\mathfrak m(H_\nu)$ is odd (cf. \eqref{eq:n-m-equivar}--\eqref{eq:mathfrak n-m-equivar}. Put 
\begin{equation}\label{eq:nu-o}
\nu_o = \nu_o(H) := \max  \{\nu^{\prime} \; : \; \mathfrak m(H_{\nu^{\prime})}\;  \text{is odd}\} 
\end{equation} 
(obviously, $\nu_o < \infty$).
Then, system \eqref{eq:Lap} admits 
a non-radial solution $u\in \mathscr H$ such that $G_u=H_{\nu_o r}$ for some $r\in \bn$.

\vs

\end{theorem}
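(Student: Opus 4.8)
The plan is to follow exactly the scheme of Theorem~\ref{th:main1}, but now carrying the refined Burnside-ring bookkeeping for the larger group $G = O(2)\times\Gamma\times\bz_2$. First I would set up the equivariant degree computation: by Lemmas~\ref{lem:a-priori} and~\ref{lem:deriv}, and assumption~(D) (together with (B$_1$)--(B$_2$)) guaranteeing that $\mathscr A:\mathscr H\to\mathscr H$ is an isomorphism, one obtains $\gdeg(\mathscr F,\Omega) = (G) - \gdeg(\mathscr A,B(\mathscr H))$ with $\Omega = B_R(0)\setminus\overline{B_\ve(0)}$. Using the $G$-isotypic decomposition \eqref{eq:isoH} and the multiplicativity property \eqref{eq:mult-property} together with condition (B$_2$), this factors as
\[
\gdeg(\mathscr A,B(\mathscr H)) = \prod_{m\ge 0}\ \prod_{j=0}^{\mathfrak r} \big(\deg_{\cV^-_{m,j}}\big)^{\mathfrak n_j'(m)},
\]
where $\mathfrak n_j'(m)$ counts (with multiplicity $m_j$) the pairs $(n,m)$ with $s_{nm}<\mu_j$, $\mu_j\in\sigma_+(A)$. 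The $m=0$ factor contributes only radial orbit types (Remark~\ref{rem-non-maximal}), so it can be set aside.

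Next I would isolate the contribution of a single maximal orbit type. Fix $(H)\in\mathfrak M_1$ with $\mathfrak m(H_\nu)$ odd for some $\nu>0$, and let $\nu_o = \nu_o(H)$ be as in \eqref{eq:nu-o}; finiteness of $\nu_o$ follows from $s_{1m}>\sqrt{m(m+2)}$, which forces $\mathfrak n_j(H_\nu) = 0$ for all large $\nu$. By the folding property \eqref{eq:folding-basic-degree}, $\deg_{\cV^-_{m,j}} = \Psi_m\big(\deg_{\cV^-_{1,j}}\big)$, and $(H)$ being maximal in $\mathscr H_1\setminus\{0\}$ means $H_m = \psi_m^{-1}(H)$ is maximal in $\mathscr H_m\setminus\{0\}$. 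The key algebraic point is then that for a maximal orbit type $(H_\mu)$, the coefficient $\operatorname{coeff}^{H_\mu}$ of a product in $A_0(G)$ behaves multiplicatively up to a parity computation: since $(H_\mu)$ is maximal, $\operatorname{coeff}^{H_\mu}\big((H')\cdot(K')\big)$ only picks up contributions from the top orbit types of the factors, and one shows that $\operatorname{coeff}^{H_{\nu_o r}}\big(\gdeg(\mathscr A,B(\mathscr H))\big) \equiv \mathfrak m(H_{\nu_o}) \pmod 2$ for a suitable $r$ (the exact $r$ coming from how $H_{\nu_o}$ sits inside the chain of maximal orbit types, exactly as the auxiliary $m'$ appears in Theorem~\ref{th:main1}). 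Hence this coefficient of $\gdeg(\mathscr F,\Omega)$ is odd, in particular non-zero.

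Finally, by the existence property of the Brouwer equivariant degree, a non-zero coefficient on $(H_{\nu_o r})$ forces a solution $u\in\Omega$ of $\mathscr F(u)=0$ with $G_u\ge H_{\nu_o r}$; maximality of the orbit type, combined with the fact that $u\neq 0$ lies in the part of $\mathscr H$ built from $\mathscr H_{m,j}$ with $m>0$, pins the isotropy down to $G_u = H_{\nu_o r}$ (any strictly larger isotropy would be a radial orbit type, contradicting $\nu_o<\infty$ applied along the relevant subspace), and in particular $u$ is non-radial since $\nu_o r\ge\nu>0$.

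\textbf{Main obstacle.} The delicate step is the parity computation for $\operatorname{coeff}^{H_{\nu_o r}}\big(\gdeg(\mathscr A,B(\mathscr H))\big)$: one must argue that multiplying the basic degrees $\deg_{\cV^-_{m,j}} = (G) - (\text{lower})$ together, only the term linear in the "maximal part" survives modulo $2$ at the level $(H_{\nu_o r})$, and that cross-terms and the ambiguous $m=0$ factor $\gdeg(\mathscr A_0,B(\mathscr E_0))$ do not interfere. This requires a careful analysis of which products $(L_1)\cdot(L_2)$ of orbit types can produce $(H_{\nu_o r})$, using that $(H)\in\mathfrak M_1$ is maximal and that $\psi_{\nu_o}$-folding maps the poset of orbit types compatibly; the paper defers the details to the treatment in this section itself, so I would carry out this Burnside-ring lemma explicitly before assembling the final argument.
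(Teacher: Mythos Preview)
Your plan is essentially the paper's own proof: set up $\gdeg(\mathscr F,\Omega)=(G)-\prod\gdeg(\mathscr A^j_{n,m},B(\mathscr E^j_{nm}))$ via Lemmas~\ref{lem:a-priori}--\ref{lem:deriv} and multiplicativity, then show the coefficient at the maximal orbit type survives by a cancellation argument in $A_0(G)$. Two small corrections worth noting: the paper computes the coefficient directly at level $(H_{\nu_o})$, not $(H_{\nu_o r})$---the parameter $r$ enters only at the very end, because the existence property yields $G_u\ge H_{\nu_o}$ and the non-radial isotropies above $H_{\nu_o}$ are exactly the $H_{\nu_o r}$; and the key algebraic step is made explicit as a separate lemma showing that for a common maximal orbit type $(H_o)$ one has $\text{coeff}^{H_o}\big(\deg_{\cV^-_{m,j}}\cdot\deg_{\cV^-_{m',j'}}\big)=0$ \emph{exactly} (via the identity $-2x_o+x_o^2\,|W(H_o)|=0$), so that an odd number $\mathfrak m(H_{\nu_o})$ of factors leaves a single nonzero $x_o(H_{\nu_o})$ term---this is precisely the ``Burnside-ring lemma'' you flagged as the main obstacle.
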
 
\begin{proof} Let $B_\ve(0)$, $B_R(0)$ and $\Omega$ be the same as in the proof of Theorem \ref{th:main1}. Then, $\mathscr F$
is $B_\ve(0)$-admissibly $G$-equivariantly homotopic to the isomorphism $\mathscr A$ and $B_R(0)$-admissibly $G$-equivariantly homotopic to $\id$. Moreover, formula \eqref{eq:product-basic} refines to the following one:
\begin{align}
\gdeg(\mathscr F,\Omega) &= \gdeg(\mathscr F,B_R(0))-\gdeg(\mathscr F,B_\ve(0)) \nonumber\\
&= (G)-\prod_{j=0}^{\mathfrak r} \prod_{m=0}^\infty \prod_{n=1}^\infty \gdeg(\mathscr A^j_{n,m},B(\mathscr E^j_{nm})), 
\label{eq:product-basic-refined}
\end{align}
where $\mathscr A^j_{n,m}:=\mathscr A|_{\mathscr E^j_{nm}}$.  Notice that 
\begin{equation}\label{eq:deg-through-basic}
\gdeg(\mathscr A^j_{n,m},B(\mathscr E^j_{nm}))=\begin{cases}
(\deg_{\cV^-_{m,j}})^{m_j}& \text{ if } s_{nm}<\mu_j,\\
(G) &\text{ otherwise}. 
\end{cases}
\end{equation}
Also, for any basic degree $\deg_{\cV^-_{m,j}}$ and {\it maximal} orbit type $(H_o)$ in $\cV^-_{m,j} \setminus \{0\}$, the recurrence 
formula \eqref{eq:rec} implies
\begin{equation}\label{eq:max-type-basic-deg}
\deg_{\cV_{m,j}^-}=(G)- x_o(H_o) + c,\quad     -x_o:=\frac{(-1)^{\text{dim}\cV_{m,j}^{-H_o}}-1}{|W(H_o)|},
\end{equation}
where $c \in A(G)$ satisfies the condition: coeff$^{H_o}(c) = 0$. 
Then, by  \eqref{eq:max-type-basic-deg}, one has  
\begin{equation}\label{eq:coef-x-o-ireduc}
x_o=\begin{cases}
0 & \; \text{if $\text{dim}\cV_{m,j}^{-H_o}$ is even};\\ 
1 & \text{ if $\text{dim}\cV_{m,j}^{-H_o}$ is odd  and $|W(H_o)|=2$};\\
2 &    \text{ if $\text{dim}\cV_{m,j}^{-H_o}$ is odd  and $|W(H_o)|=1$}.
\end{cases}
\end{equation} 
To complete the proof of Theorem  \ref{th:main1-3}, we need the following important
\begin{lemma}\label{lem:same-maximal}
Suppose that $(H_o)$ is a maximal orbit type in $\cV_{m,j}^-\setminus\{0\}$ and $\cV_{m',j'}^-\setminus \{0\}$ and both 
$\cV_{m,j}^{-H_o}$ and  $\cV_{m',j'}^{-H_o}$ are of odd dimension. Then:
\begin{itemize}
\item[(i)] \rm{coeff}$^{H_o}(\deg{\cV_{m,j}^-})$ =  \rm{coeff}$^{H_o}(\deg{\cV_{m',j'}^-})$; 
\item[(ii)] \rm{coeff}$^{H_o}(\deg{\cV_{m,j}^-}\cdot \deg{\cV_{m',j'}^-}) = 0$. 
\end{itemize}
\end{lemma}

\noindent
{\it Proof of Lemma \ref{lem:same-maximal}:}
\

(i) Follows immediately from \eqref{eq:coef-x-o-ireduc}.
\smallskip

(ii) Consider the product
\begin{align*}
\deg{\cV_{m,j}^-}\cdot \deg{\cV_{m',j'}^-}&=  \big( (G)-x_o(H_o)+ c   \big) \cdot \big( (G)-x_o(H_o)+ c'    \big)\\
&= (G)-2x_o(H_o)+x_o^2(H_o)\cdot (H_o) + b,
\end{align*}
where $c$, $c'$, $b\in A(G)$ satisfy coeff$^{H_o}(c)$ = coeff$^{H_o}(c')$ = coeff$^{H_o}(b)$ = 0.  Then, by using the recurrence formula \eqref{eq:rec}, one obtains:
  \[
  (H_o)\cdot(H_o)=y_o(H_o)+ d, \quad y_o:=\frac{n(H_o,H_o)^2|W(H_o)|^2}{|W(H_o)|}= |(W(H_o)|,
  \]
with coeff$^{H_o}(d) = 0$.
 Hence,
  \[
  -2x_o(H_o)+x_o^2(H_o)\cdot (H_o)=\begin{cases}
 -2+2 & \text{if } x_o=1, \;\; |W(H_o)|=2\\
  -4+4  & \text{if } x_o=2, \;\; |W(H_o)|=1\\
  \end{cases}\cdot (H_o)
  =0,
  \]
and the statement follows.

\bigskip
\noindent
{\it Completion of the proof of Theorem \ref{th:main1-3}}. 
By \eqref{eq:product-basic-refined}-\eqref{eq:max-type-basic-deg} (see also \eqref{eq:nu-o}), one has: 
\begin{equation}\label{eq:100}
\prod_{j=0}^{\mathfrak r} \prod_{m=0}^\infty \prod_{n=1}^\infty \gdeg(\mathscr A^j_{n,m},B(\mathscr E^j_{nm})) = 
\big((G) - a\big) \cdot  \prod_{k=1}^{\mathfrak m(H_{\nu_o})} \big((G)+x_o(H_{\nu_o}) + c_k\big),
\end{equation}
where coeff$^{H_{\nu_o}}(c_k)$ = coeff$^{H_{\nu_o}}(a) = 0$. Since $ \mathfrak m(H_{\nu_o})$ is odd,  Lemma \ref{lem:same-maximal} 
yields:
\begin{align}
\prod_{k=1}^{\mathfrak m(H_{\nu_o})} \big((G)+x_o(H_{\nu_o}) + c_k\big) &= 
\prod_{k=2}^{\mathfrak m(H_{\nu_o})} \big((G)+x_o(H_{\nu_o}) + c_k\big) \cdot  \big((G)+x_o(H_{\nu_o}) + c_1\big) \nonumber\\ 
&= \big((G) - a^{\prime}\big) \cdot \big((G)+x_o(H_{\nu_o}) + c_1\big), \label{eq:101}
\end{align}
where coeff$^{H_{\nu_o}}(a^{\prime}) = 0$. Combining \eqref{eq:product-basic-refined}, \eqref{eq:100} and \eqref{eq:101} with the maximality of $(H_{\nu_o})$ yields:
\begin{align*}
\gdeg(\mathscr F,\Omega) &= (G) - \big((G) - b \big)  \cdot \big((G)+x_o(H_{\nu_o}) + c_1\big) \\
&=  x_o(H_{\nu_o}) - b - x_o (H_{\nu_o}) \cdot b - b - b \cdot c_1 + c_1 = x_o(H_{\nu_o}) + d, 
\end{align*}
(here coeff$^{H_{\nu_o}}(b)$ = coeff$^{H_{\nu_o}}(d) = 0$).
Therefore, by the existence property of the Brouwer equivariant degree, there exists a non-zero solution $u$ to the equation $\mathscr F(u)=0$ such that $G_u\ge H_{\nu_o}$. Clearly, such a solution $u$ is not a radial solution.
\end{proof} 
\vs
Similarly to the non-equivariant case, one can formulate the second main result providing the existence of non-radial solutions without assuming the condition (D). 
\vs

\begin{theorem}\label{th:main2-3}
Under the assumptions \ref{c1}--\ref{c4}, (B$_1$)--(B$_2$), let $l\in \bn$ be an integer such that  \eqref{eq:s3-1} is satisfied. 
Suppose, in addition, that there exist $(H)\in \mathfrak M_1$ and  $m > 0$ such that $\mathfrak m(H_{(2m-1)l})$ is odd (cf. \eqref{eq:n-m-equivar}--\eqref{eq:mathfrak n-m-equivar}. Put 
\begin{equation}\label{eq:nu-o}
\nu_o:= \max  \{(2m-1)l \; : \; \mathfrak m(H_{(2m-1)l})\;  \text{is odd}\} 
\end{equation} 
(obviously, $\nu_o < \infty$).
Then, system \eqref{eq:Lap} admits 
a non-radial solution $u\in \mathscr H$ such that $G_u=H_{\nu_o r}$ for some $r\in \bn$.

\end{theorem}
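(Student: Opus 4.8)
\textbf{Proof plan for Theorem \ref{th:main2-3}.}

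The plan is to mimic the proof of Theorem \ref{th:main2} in exactly the way that the proof of Theorem \ref{th:main1-3} mimicked the proof of Theorem \ref{th:main1}. The only structural change relative to Theorem \ref{th:main1-3} is that the isomorphism of $\mathscr A$ on all of $\mathscr H$ is replaced by the isomorphism of its restriction to the fixed-point subspace $\mathscr H^{H_l}$ with $H_l = \bz_{2l}^d$, which is guaranteed by hypothesis \eqref{eq:s3-1}; so one first passes to this fixed-point space and then applies the degree computation from Theorem \ref{th:main1-3} verbatim.

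First I would set up the $H_l$-fixed-point reduction: recall from the discussion preceding Theorem \ref{th:main2} that $\mathscr H^{H_l} = \overline{\bigoplus_{m=1}^\infty \mathscr H_{(2m-1)l}}$, and under condition \eqref{eq:s3-1} the restricted operator $\mathscr A^{H_l}$ is an isomorphism. Since $\mathscr F$ is $G$-equivariant and $H_l \trianglelefteq G$, one may work with the $W(H_l)$-equivariant map $\mathscr F^{H_l} := \mathscr F|_{\mathscr H^{H_l}}$; by Lemmas \ref{lem:a-priori} and \ref{lem:deriv} the same admissible homotopies (to $\id$ on $B_R(0)$ and to $\mathscr A^{H_l}$ on $B_\ve(0)$, both intersected with $\mathscr H^{H_l}$) are available, so on $\Omega = B_R(0)\setminus\overline{B_\ve(0)}$ we get the analogue of \eqref{eq:product-basic-refined}, now a product over only those isotypic components $\mathscr H_{m,j}$ with $m \in \{(2m'-1)l : m' \in \bn\}$. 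Second, I would invoke the basic-degree computations \eqref{eq:deg-through-basic}--\eqref{eq:coef-x-o-ireduc} and Lemma \ref{lem:same-maximal} exactly as before: take $(H) \in \mathfrak M_1$ with $\mathfrak m(H_{(2m-1)l})$ odd, let $\nu_o$ be the maximal exponent of the form $(2m'-1)l$ with $\mathfrak m(H_{\nu_o})$ odd (finite because $\mathfrak n_j(H_\nu) = 0$ once $s_{1\nu} \geq \max_j \mu_j$, using $s_{1\nu} > \sqrt{\nu(\nu+2)}$), and run the telescoping product argument \eqref{eq:100}--\eqref{eq:101}. Since $(H_{\nu_o})$ is maximal in $\mathscr H_{\nu_o}\setminus\{0\}$, it does not appear in the lower-$m$ factors of the product, so the computation collapses to $\gdeg(\mathscr F^{H_l},\Omega) = x_o(H_{\nu_o}) + d$ with $\mathrm{coeff}^{H_{\nu_o}}(d) = 0$ and $x_o \neq 0$. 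Finally, the existence property of the equivariant degree yields a nonzero solution $u \in \mathscr H^{H_l}$ with $G_u \geq H_{\nu_o}$; a last observation (as in Theorem \ref{th:main1-3}, or via the folding homomorphism $\Psi_\nu$ applied to the orbit-type lattice) shows $G_u = H_{\nu_o r}$ for some $r \in \bn$, and such $u$ is manifestly non-radial since $\nu_o > 0$.

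The one point needing care — the main (though minor) obstacle — is verifying that the restriction to $\mathscr H^{H_l}$ does not destroy the maximality of $(H_{\nu_o})$ that Lemma \ref{lem:same-maximal} requires: one must confirm that $(H_{\nu_o})$, which is maximal in $\Phi_0(G;\mathscr H_{\nu_o}\setminus\{0\})$, remains maximal among the orbit types occurring in the truncated sum $\overline{\bigoplus_{m'} \mathscr H_{(2m'-1)l}}$, i.e.\ that no orbit type from a component $\mathscr H_{(2m'-1)l}$ with $(2m'-1)l < \nu_o$ strictly dominates it. Since $H_{\nu_o} = \psi_{\nu_o}^{-1}(H)$ contains $\bz_{2\nu_o}^d \supseteq \bz_{2l}^d = H_l$ and the rotation part of any orbit type in $\mathscr H_m$ is contained in $\bz_{2m}^d$ (up to conjugacy), an orbit type in $\mathscr H_{(2m'-1)l}$ with $(2m'-1)l < \nu_o$ cannot contain $\bz_{2\nu_o}^d$ unless $\nu_o \mid (2m'-1)l$, which is impossible; so domination fails and Lemma \ref{lem:same-maximal} applies as needed. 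With that checked, the proof reduces, as the authors indicate, to repeating the arguments of Theorem \ref{th:main1-3} with $\mathscr H$ replaced by $\mathscr H^{H_l}$ throughout.
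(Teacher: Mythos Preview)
Your proposal is correct and follows exactly the approach the paper intends: the paper does not give an explicit proof of Theorem \ref{th:main2-3} at all, relying instead on the analogy with Theorem \ref{th:main2} (whose proof reads ``One needs to repeat the same arguments that were presented in the proof of Theorem~\ref{th:main1}''), so your strategy of passing to $\mathscr H^{H_l}$ and rerunning the argument of Theorem \ref{th:main1-3} is precisely what is meant. Your additional paragraph checking that maximality of $(H_{\nu_o})$ survives the restriction is a level of detail the paper omits; the only small slip is the claim $H_l \trianglelefteq G$, which is not needed --- it suffices that $\mathscr F$ restricts to $\mathscr H^{H_l}$ and is $W(H_l)$-equivariant there.
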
 

\medskip\noi
{\bf (c) Example.}  Assume that $\Gamma=S_4$ acts on the space $V:=\br^8$ by permuting the coordinates of vectors the same way as the symmetries of a cube permute its vertices. We consider system   \eqref{eq:Lap} for which the map $\bm f$ satisfies  conditions  \ref{c1}--\ref{c4}, (B$_1$), and the matrix $A$ is given by 
\begin{equation}
A=\left[\begin{array}{cccccccc}~~ c~~& ~~d~~& ~~0~~& ~~d~~&~~0~~&~~d~~&~~0~~&~~0~~  \\
d& c& d& 0&0&0&d&0  \\
0& d& c& d&0&0&0&d  \\
d& 0& d& c&d&0&0&0  \\
0& 0& 0& d&c&d&0&d  \\
d& 0& 0& 0&d&c&d&0  \\
0& d& 0& 0&0&d&c&d  \\
0& 0& d& 0&d&0&d&c  
\end{array} \right] \label{eq:oct-A}
\end{equation}
(the values of $c$ and $d$ will be specified later on).
Notice that one has the following table of characters:
\medskip
\begin{center}
\begin{tabular}{|c|ccccc|}
\hline
$\chi$ & $()$ & $(1,2)$ & $(1,2)(3,4)$ & $(1,2,3)$ & $(1,2,3,4)$\\
\hline
$\chi_0$ &1&1&1&1&1\\
$\chi_1$ &1&-1&1&1&-1\\
$\chi_2$ &2&0&2&-1&0\\
$\chi_3$ &3&1&-1&0&-1\\
$\chi_4$ &3&-1&-1&0&1\\
\hline
$\chi_V$ &8&0&0&2&0\\
\hline 
 \end{tabular} 
\end{center}
\medskip
which implies that 
\begin{equation}\label{isotyp-S4}
V=\cU_0\oplus\cU_1\oplus \cU_3\oplus \cU_4
\end{equation}
(here $\cU_j$ stands for the irreducible representation corresponding to the character $\chi_j$). Clearly, \eqref{isotyp-S4} implies
 condition (B$_2$). One can easily compute the spectrum of $A$:
\[\sigma(A)=\{\mu_0=c+3d,\mu_1=c-3d,\mu_3=c+d,\mu_4=c-d\}.\]
Obviously,
\[
\text{dim\,} E(\mu_0)=\text{dim\,} E(\mu_1)=1,\quad \text{ and } \text{dim\,} E(\mu_3)=\text{dim\,} E(\mu_4)=3
\]
and 
\[
E(\mu_0)=\cU_0, \;\; E(\mu_1)=\cU_1,\;\; E(\mu_3)=\cU_3,\;\; E(\mu_4)=\cU_4.
\]
The numbers $s_{nm}$ are shown in the table below:

\medskip

\begin{center}
\begin{tabular}{|c|cccccc|}
\hline
$n$&	 $J _0(x)$	&$J_1(x)$&$	J_2(x)$&$	J_3(x)$&$	J_4(x)$&$	J_5(x)$\\
\hline
1&	2.4048&	3.8317&	5.1356&	6.3802&	7.5883&	8.7715\\
2&	5.5201&	7.0156&	8.4172&	9.7610&	11.0647&	12.3386\\
3&	8.6537&	10.1735&	11.6198&	13.0152&	14.3725&	15.7002\\
4&	11.7915&	13.3237&	14.7960&	16.2235&	17.6160&	18.9801\\
5&	14.9309&	16.4706&	17.9598&	19.4094&	20.8269&	22.2178\\

\hline
\end{tabular}
\end{center}
Take  $c=4$ and $d=1$, thus $\mu_0=7$, $\mu_1=1$, $\mu_3=5$, $\mu_4=3$ and condition (D) is satisfied.
 
 Put $G:=O(2)\times S_4\times \bz_2$.  Then,
\begin{align*}
\gdeg(\mathscr F,\Omega)&=(G)-\deg_{\cV_{0,0}^-}^2\cdot \deg_{\cV_{1,0}^-}\cdot \deg_{\cV_{2,0}^-}\cdot \deg_{\cV_{3,0}^-}\cdot \deg_{\cV_{0,3}^-}\cdot \deg_{\cV_{1,3}^-}\cdot \deg_{\cV_{0,4}^-}\\
&=(G)-\deg_{\cV_{1,0}^-}\cdot \deg_{\cV_{2,0}^-}\cdot \deg_{\cV_{3,0}^-}\cdot \deg_{\cV_{0,3}^-}\cdot \deg_{\cV_{1,3}^-}\cdot \deg_{\cV_{0,4}^-}.
\end{align*}
In order to effectively apply Theorem \ref{th:main1-3}, we used GAP package {\tt EquiDeg} to carry on all the related symbolic computations.  We refer to Appendix \ref{sec:GAP} for the exact GAP codes that were applied for this example. In particular, 
the maximal orbit types in $\mathfrak M_1$ are:
\begin{alignat*}{5}
(H_{1,228})&=(D_6\times_{D_6}D_3^p),\quad &(H_{1,248})&= (D_4\times_{D_4}^{\bz_2^-} D_4^p),\quad &(H_{1,286})&=(D_2^{D_1}\times^{D_2^d}_{\bz_2} D_2^p),\\
(H_{1,334})&=(D_2^{D_1}\times_{\bz_2} ^{D_4^z}D_ 4^p),\quad &(H_{1,333})&=(D_2^{D_1}\times_{\bz_2}^{D_4^d} D_4^p),\quad& (H_{1,360})&=(D_2^{D_1}\times_{\bz_2}^{S_4} S_4^p),\\
(H_{1,359})&=(D_2^{D_1}\times _{\bz_2}^{S_4^-}S_4^p).&&&&
\end{alignat*}

\medskip
\begin{remark}\label{rem:notations} \rm (i)  For any subgroup $S  \leq S_4$, the symbol $S^p$ stands for $S \times \mathbb Z_2$.  

(ii) Given two subgroups $H \leq O(2)$ and $K \leq S_4^p$, we refer to Appendix \ref{subsec:G-degree}, item (a), for the ``amalgamated notation"  $\amal{H}{Z} {L} {R}{K}$. 

(iii) We refer to  \cite{AED} for the explicit description of the (sub)groups $S_4^-$, $D_k^z$, $D_k^d$, and $\mathbb Z_2^-$.
\end{remark}

Moreover, we have 
\begin{align*}
 \deg_{\cV_{1,0}^-}&=(G)-({\bm H_{1,360}})\\
\deg_{\cV_{1,3}^-}&=(G)-2(H_{1,46})-(H_{1,83})+2(H_{1,108})+2(H_{1,126}) +2(H_{1,129}) +2(H_{1,204})\\
&+(H_{1,212})-2(\bm{H_{1,228}})-2(\bm{H_{1,248}})-(\bm{H_{1,286}})-(H_{1,330})-(\bm{H_{1,334}}),
\end{align*}
where we denote in bold the maximal orbit types from $\mathfrak M_1$. Then, by inspection, one can easily determine that 
\begin{equation}\label{eq:numbers-m(H)-required}
\mathfrak m({\bm H_{1,360}})=1, \quad \mathfrak m (\bm{H_{1,228}})=1, \quad \mathfrak m(\bm{H_{1,248}})=1, \quad \mathfrak m(\bm{H_{1,286}})=1, \quad \mathfrak m(\bm{H_{1,334}})=1.
\end{equation}
Since, the  exact degree $\gdeg(\mathscr F,\Omega) $ can be  effectively computed using the GAP package, we can use it to double check the correctness of our conclusions: 
\begin{align*}
\gdeg(\mathscr F,\Omega) &=-2(H_{1,2})-2(H_{1,5})-2(H_{1,8})-8(H_{1,11})-2(H_{1,14})+4(H_{1,20})\\
&+4(H_{1,21})+2(H_{1,25})+2(H_{1,41})+2(H_{1,46})]+4(H_{1,51})+2(H_{1,53})\\
&+2(H_{1,57})+2(H_{1,59})-2(H_{1,77})+1(H_{1,83})-(H_{1,88})-(H_{1,89})+2(H_{1,90})\\
&+(H_{1,91}))-(H_{1,92})+2(H_{1,98})+2(H_{1,102})+2(H_{1,103})-(H_{1,106})\\
&-2(H_{1,108})-2(H_{1,115})-2(H_{1,117})-2(H_{1,119})-2(H_{1,126})-2(H_{1,129})\\
&+2(H_{1,152})+2(H_{1,155})-2(H_{1,204})-(H_{1,208})-(H_{1,212})+(H_{1,220})\\
&+(H_{1,221})-(H_{1,223})+(H_{1,224})-2(H_{1,227})-2(H_{1,228})+(H_{1,244})+(H_{1,245})\\
&-(H_{1,246})+2(H_{1,248})-2(H_{1,256})+(H_{1,286})-(H_{1,292})+(H_{1,295})\\
&+(H_{1,298})-(H_{1,300})-(H_{1,301})+(H_{1,330})+(H_{1,334})-(H_{1,360})-(H_{2,17})\\
&+(H_{2,88})+(H_{2,89})+(H_{2,106})+(H_{2,223})-(H_{2,244})-(H_{2,245})-(H_{2,295})\\
&-(H_{2,298})+(H_{2,360})-(H_{3,17})+(H_{3,88})+(H_{3,89})+(H_{3,106})+(H_{3,223})\\
&-(H_{3,244})-(H_{3,245})-(H_{3,295})-(H_{3,298})+(H_{3,360})+(H_{0,12})-(H_{0,43})\\
&-(H_{0,44})-(H_{0,52})-(H_{0,90})+(H_{0,98})+(H_{0,99})+(H_{0,110})+(H_{0,111}).
\end{align*}
On the other hand, by analyzing the coefficients of $\gdeg(\mathscr F,\Omega) $, one can also deduct the existence of various symmetric types of {\bf radial solutions}. More precisely, the maximal orbit types in $\displaystyle \bigoplus_{j=0}^{\mathfrak r} \mathscr H_ {0,j}\setminus\{0\}$ with non-zero coefficients in $\gdeg(\mathscr F,\Omega)$ are:
\begin{alignat*}{5}
(H_{0,84})&= (O(2)\times D_2^d), \quad &(H_{0,98})&=(O(2)\times D_3), \quad &(H_{0,99})&=(O(2)\times D_3^z)\\
(H_{0,110})&=(O(2)\times D_ 4^z),\quad &(H_{0,111})&=(O(2)\times D_4^d).&&
\end{alignat*}
Combing this with \eqref{eq:numbers-m(H)-required} and Theorem \ref{th:main1-3}, one obtains the following
 
 \vs

\begin{theorem} Under the assumptions \ref{c1}--\ref{c4}, (B$_1$)--(B$_2$) assume that the matrix $A$ is given by \eqref{eq:oct-A} with  $c=4$ and $d=1$. 
The  system \eqref{eq:Lap} admits at least five different orbits of 
non-radial solutions $u\in \mathscr H$ with the following orbit types $(G_u)$:
\begin{itemize}
\item 
$(D_{6m}^{\bz_m}\times_{D_6}D_3^p)$ for some $m\in \bn$;
\item
$(D_{4m}^{\bz_m}\times_{D_4}^{\bz_2^-} D_4^p)$  for some $m\in \bn$;

\item $(D_{2m}^{D_m}\times^{D_2^d}_{\bz_2} D_2^p)$  for some $m\in \bn$;

\item $(D_{2m}^{D_m}\times_{\bz_2} ^{D_4^z}D_ 4^p)$  for some $m\in \bn$;
\item $(D_{2m}^{D_m}\times_{\bz_2}^{S_4} S_4^p)$  for some $m\in \bn$.
\end{itemize}
Moreover, the system \eqref{eq:Lap} admits at least four different orbits of  
radial solutions $u\in \mathscr H$ with the  following orbit types $(G_u)$:
\[
(O(2)\times D_3),\quad 
(O(2)\times D_3^z),\quad 
(O(2)\times D_ 4^z), \quad
(O(2)\times D_4^d).
\]
\end{theorem}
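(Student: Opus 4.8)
The plan is to deduce this theorem from Theorem~\ref{th:main1-3} (for the non-radial solutions) and from a direct inspection of the coefficients of $\gdeg(\mathscr F,\Omega)$ (for the radial ones). First I would verify the hypotheses of Theorem~\ref{th:main1-3} for the present data. Conditions \ref{c1}--\ref{c4} and \ref{d5} are assumed; condition (B$_2$) says that $A$ is scalar on each $\Gamma$-isotypic component, which follows from the $S_4$-character computation giving \eqref{isotyp-S4} together with \eqref{eq:oct-A}, with eigenvalues $\mu_0=c+3d=7$, $\mu_1=c-3d=1$, $\mu_3=c+d=5$, $\mu_4=c-d=3$; condition (D) then follows by comparing $\{7,5,3,1\}$ with the table of Bessel zeros $s_{nm}$ and checking that none of these numbers appears there.

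Next I would evaluate $\gdeg(\mathscr F,\Omega)$. By \eqref{eq:product-basic-refined}--\eqref{eq:deg-through-basic} it equals $(G)$ minus the product of the basic degrees $\deg_{\cV^-_{m,j}}^{m_j}$ over all triples $(n,m,j)$ with $s_{nm}<\mu_j$; here $m_j=\dim V_j/\dim\cU_j=1$ for each $j\in\{0,1,3,4\}$, so no nontrivial exponents survive. Running the Bessel table against $\mu_0=7$, $\mu_3=5$, $\mu_4=3$ (the component $\cU_1$ contributes nothing, since already $s_{10}=2.4048>1=\mu_1$) leaves exactly the seven factors $\deg_{\cV^-_{0,0}}^2$, $\deg_{\cV^-_{1,0}}$, $\deg_{\cV^-_{2,0}}$, $\deg_{\cV^-_{3,0}}$, $\deg_{\cV^-_{0,3}}$, $\deg_{\cV^-_{1,3}}$, $\deg_{\cV^-_{0,4}}$, and, since $\deg_{\cV^-_{0,0}}^2=(G)$, this reduces to the product displayed just before the statement. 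The values of the surviving basic degrees, the orbit-type lattice of $G=O(2)\times S_4\times\bz_2$, the set $\mathfrak M_1$ of maximal orbit types in $\Phi_0(G;\mathscr H_1\setminus\{0\})$ (the seven amalgamated subgroups listed above), and the resulting Burnside-ring product are computed with the GAP package {\tt EquiDeg} (Appendix~\ref{sec:GAP}), producing the explicit expansion of $\gdeg(\mathscr F,\Omega)$ displayed above.

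For the non-radial solutions I would then, for each $(H)\in\mathfrak M_1$, compute $\mathfrak n_j(H_\nu)$ and $\mathfrak m(H_\nu)$ from \eqref{eq:n-m-equivar}--\eqref{eq:mathfrak n-m-equivar}. A maximal type $(H)$ occurs with nonzero coefficient in a single surviving basic degree $\deg_{\cV^-_{1,j_H}}$, and the folding identity $\deg_{\cV^-_{\nu,j_H}}=\Psi_\nu(\deg_{\cV^-_{1,j_H}})$ keeps that coefficient nonzero, so $\mathfrak m(H_\nu)=m_{j_H}\,|\{n:s_{n\nu}<\mu_{j_H}\}|$, which is read off the Bessel table. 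A direct check then shows that exactly the five orbit types recorded in \eqref{eq:numbers-m(H)-required} admit an odd value of $\mathfrak m(H_\nu)$ (the remaining two members of $\mathfrak M_1$ have $\mathfrak m(H_\nu)=0$ for all $\nu$, being supported only in basic degrees $\deg_{\cV^-_{1,j}}$ with $\mu_j\le 3<s_{11}$), and this pins down $\nu_o(H)$ via \eqref{eq:nu-o}. Applying Theorem~\ref{th:main1-3} to each of these five types yields a non-radial solution $u$ with $G_u=H_{\nu_o r}$ for some $r\in\bn$; unwinding the folding homomorphism \eqref{eq:app4.3-fold} rewrites the five labels as the orbit types in the statement, and the five resulting families, being pairwise non-conjugate (as their amalgamated descriptions show), give at least five distinct orbits of non-radial solutions.

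Finally, for the radial solutions I would extract from the explicit expansion of $\gdeg(\mathscr F,\Omega)$ the terms supported on $\bigoplus_{j=0}^{\mathfrak r}\mathscr H_{0,j}\setminus\{0\}$, i.e.\ on orbit types containing the rotation group $O(2)$, and single out those that are maximal (hence, by Remark~\ref{rem-non-maximal}, maximal in all of $\mathscr H\setminus\{0\}$) and carry a nonzero coefficient: these are $H_{0,98},H_{0,99},H_{0,110},H_{0,111}$, identified as $(O(2)\times D_3)$, $(O(2)\times D_3^z)$, $(O(2)\times D_4^z)$, $(O(2)\times D_4^d)$. By the existence property of the Brouwer equivariant degree each of them is the isotropy group of some solution, and these four are pairwise non-conjugate, so they give four distinct orbits of radial solutions. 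The main obstacle is the reliable symbolic computation in the Burnside ring $A_0(O(2)\times S_4\times\bz_2)$ — evaluating the seven basic degrees, forming their product, and correctly matching the machine-generated orbit-type labels $H_{m,k}$ to the group-theoretic descriptions of Remark~\ref{rem:notations}; the rest is the elementary bookkeeping of the Bessel-zero inequalities and the unwinding of $\psi_\nu$.
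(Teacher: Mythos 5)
Your proposal is correct and follows essentially the same route as the paper: the five non-radial orbit types are obtained by verifying the hypotheses of Theorem~\ref{th:main1-3} and checking oddness of $\mathfrak m(H_\nu)$ for the maximal types in $\mathfrak M_1$ appearing in the surviving basic degrees (yielding $\nu_o=1$ for $H_{1,228},H_{1,248},H_{1,286},H_{1,334}$ and $\nu_o=3$ for $H_{1,360}$), while the four radial orbits are read off from the nonzero coefficients of the maximal types in $\bigoplus_j\mathscr H_{0,j}\setminus\{0\}$ in the GAP-computed expansion of $\gdeg(\mathscr F,\Omega)$. Your write-up is merely more explicit than the paper's about the Bessel-zero bookkeeping and the folding argument, but the substance is identical.
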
 
\begin{proof} The statement follows directly from Theorem \ref{th:main1-3}. To be more precise, notice that for the orbit types  $(H)$ with $H= H_{1,228}$, $H_{1,248}$,  $H_{1,286}$ and  $H_{1,334}$, we have $\nu=1$ and $\nu_o=1$, and for $H= H_{1,360}$, we have $\nu=1$ and $\nu_o=3$. On the other hand, for the maximal orbit types 
$(O(2)\times D_3)$,
$(O(2)\times D_3^z)$,
$(O(2)\times D_ 4^z)$ and $(O(2)\times D_4^d)$, one can conclude the existence of the related radial solutions  to  \eqref{eq:Lap}  by a direct inspection of the explicit formula for $\gdeg(\mathscr F,\Omega) $, where it is evident that all these orbit types appear with  non-zero coefficients.
\end{proof}
\vs
\appendix
\section{Equivariant Brouwer Degree Background}
\label{subsec:G-degree}

\noi
{\bf (a) Amalgamated Notation.} 
Given two groups $G_{1}$ and
$G_{2}$, 
the well-known result of \'E. Goursat (see \cite{Goursat}) provides the following description of a
subgroup $\mathscr U \leq G_{1}\times G_{2}$: 
there exist subgroups
$H\leq G_{1}$ and $K\leq G_{2}$, a group $L$, and two epimorphisms
$\varphi:H\rightarrow L$ and $\psi:K\rightarrow L$ such that
\begin{equation*}
\mathscr U =\{(h,k)\in H\times K:\varphi(h)=\psi(k)\}.
\end{equation*}
The widely used notation for $\mathscr U$ is 
\begin{equation}\label{eq:amalgam-projections}
\mathscr U:=H\prescript{\varphi}{}\times_{L}^{\psi}K,
\end{equation}
in which case $H\prescript{\varphi}{}\times_{L}^{\psi}K$ is called an
\textit{amalgamated} subgroup of $G_{1}\times G_{2}$.

In this paper, we are interested in describing conjugacy classes of $\mathscr U$. Therefore, to make notation \eqref{eq:amalgam-projections} simpler and
self-contained, it is enough to indicate $L$,  
$Z=\text{Ker\thinspace}(\varphi)$ and 
$R=\text{Ker\thinspace}(\psi)$. Hence, instead of  
\eqref{eq:amalgam-projections}, we use the following notation:
\begin{equation}
\mathscr U=:H{\prescript{Z}{}\times_{L}^{R}}K~ \label{eq:amalg}.
\end{equation}

\vs\noi
{\bf (b) Equivariant Notation.} Below $\mathcal G$ stands for a compact Lie group.
For a subgroup $H$ of $\mathcal G$, 
denote by $N(H)$ the
normalizer of $H$ in $\mathcal G$ and by $W(H)=N(H)/H$ the Weyl group of $H$.  The symbol $(H)$ stands for the conjugacy class of $H$ in $\mathcal G$. 
Put $\Phi(\mathcal G):=\{(H): H\le \mathcal G\}$.
The set $\Phi (\mathcal G)$ has a natural partial order defined by 
$(H)\leq (K)$ iff $\exists g\in \mathcal G\;\;gHg^{-1}\leq K$. 
Put $\Phi_0 (\mathcal G):= \{ (H) \in \Phi(\mathcal G) \; : \; \text{$W(H)$  is finite}\}$.

For a $\mathcal G$-space $X$ and $x\in X$, denote by
$\mathcal G_{x} :=\{g\in \mathcal G:gx=x\}$  the {\it isotropy group}  of $x$
and call $(\mathcal G_{x})$   the {\it orbit type} of $x\in X$. Put $\Phi(\mathcal G,X) := \{(H) \in \Phi_0(\mathcal G) \; : \; 
(H) = (\mathcal G_x) \; \text{for some $x \in X$}\}$ and  $\Phi_0(\mathcal G,X):= \Phi(\mathcal G,X) \cap \Phi_0(\mathcal G)$. For a subgroup $H\leq \mathcal G$, the subspace $
X^{H} :=\{x\in X:\mathcal G_{x}\geq H\}$ is called the {\it $H$-fixed-point subspace} of $X$. If $Y$ is another $\mathcal G$-space, then a continuous map $f : X \to Y$ is called {\it equivariant} if $f(gx) = gf(x)$ for each $x \in X$ and $g \in \mathcal G$. 
Let $V$ be a finite-dimensional  $\mathcal G$-representation (without loss of generality, orthogonal).
Then, $V$  decomposes into a direct sum 
\begin{equation}
V=V_{0}\oplus V_{1}\oplus \dots \oplus V_{r},  \label{eq:Giso}
\end{equation}
where each component $V_{i}$ is {\it modeled} on the
irreducible $\mathcal G$-representation $\mathcal{V}_{i}$, $i=0,1,2,\dots ,r$, that is, $V_{i}$  contains all the irreducible subrepresentations of $V$
equivalent to $\mathcal{V}_{i}$. Decomposition  \eqref{eq:Giso}  is called  $\mathcal G$\textit{-isotypic  decomposition of} $V$.
\vs \noi
{\bf (b) Axioms of Equivariant Brouwer Degree.} Denote by  $\mathcal{M}^{\mathcal G}$ the set of all admissible $\mathcal G$-pairs and let $A_0(\mathcal G)$ stand for the Burnside ring of $\mathcal G$ (see Introduction, items (a) and (b) respectively). The following result (cf.  \cite{AED}) can be considered as an axiomatic definition of the {\it $\mathcal G$-equivariant Brouwer degree}.

\begin{theorem}
\label{thm:GpropDeg} There exists a unique map $\mathcal G\mbox{\rm -}\deg:\mathcal{M}
^{\mathcal G}\to A_0(\mathcal G)$, which assigns to every admissible $\mathcal G$-pair $(f,\Omega)$ an
element $\gdeg(f,\Omega)\in A_0(\mathcal G)$
\begin{equation}
\label{eq:G-deg0}\mathcal G\mbox{\rm -}\deg(f,\Omega)=\sum_{(H)}%
{n_{H}(H)}= n_{H_{1}}(H_{1})+\dots+n_{H_{m}}(H_{m}),
\end{equation}
satisfying the following properties:

\begin{itemize}
\item[] \textbf{(Existence)} If $\mathcal G\mbox{\rm -}\deg(f,\Omega)\ne
0$, i.e., $n_{H_{i}}\neq0$ for some $i$ in \eqref{eq:G-deg0}, then there
exists $x\in\Omega$ such that $f(x)=0$ and $(\mathcal G_{x})\geq(H_{i})$.

\item[] \textbf{(Additivity)} Let $\Omega_{1}$ and $\Omega_{2}$
be two disjoint open $\mathcal G$-invariant subsets of $\Omega$ such that
$f^{-1}(0)\cap\Omega\subset\Omega_{1}\cup\Omega_{2}$. Then,
\begin{align*}
\mathcal G\mbox{\rm -}\deg(f,\Omega)=\mathcal G\mbox{\rm -}\deg(f,\Omega_{1})+\mathcal G\mbox{\rm -}\deg
(f,\Omega_{2}).
\end{align*}

\item[] \textbf{(Homotopy)} If $h:[0,1]\times V\to V$ is an
$\Omega$-admissible $\mathcal G$-homotopy, then
\begin{align*}
\mathcal G\mbox{\rm -}\deg(h_{t},\Omega)=\mathrm{constant}.
\end{align*}

\item[] \textbf{(Normalization)} Let $\Omega$ be a $G$-invariant
open bounded neighborhood of $0$ in $V$. Then,
\begin{align*}
\mathcal G\mbox{\rm -}\deg(\id,\Omega)=(\mathcal G).
\end{align*}

\item[] \textbf{(Multiplicativity)} For any $(f_{1},\Omega
_{1}),(f_{2},\Omega_{2})\in\mathcal{M} ^{\mathcal G}$,
\begin{align*}
\mathcal G\mbox{\rm -}\deg(f_{1}\times f_{2},\Omega_{1}\times\Omega_{2})=
\mathcal G\mbox{\rm -}\deg(f_{1},\Omega_{1})\cdot \mathcal G\mbox{\rm -}\deg(f_{2},\Omega_{2}),
\end{align*}
where the multiplication `$\cdot$' is taken in the Burnside ring $A_0(\mathcal G )$.

\item[] \textbf{(Recurrence Formula)} For an admissible $\mathcal G$-pair
$(f,\Omega)$, the $\mathcal G$-degree \eqref{eq:G-deg0} can be computed using the
following Recurrence Formula:
\begin{equation}
\label{eq:RF-0}n_{H}=\frac{\deg(f^{H},\Omega^{H})- \sum_{(K)>(H)}{n_{K}\,
n(H,K)\, \left|  W(K)\right|  }}{\left|  W(H)\right|  },
\end{equation}
where $\left|  X\right|  $ stands for the number of elements in the set $X$
and $\deg(f^{H},\Omega^{H})$ is the Brouwer degree of the map $f^{H}%
:=f|_{V^{H}}$ on the set $\Omega^{H}\subset V^{H}$.
\end{itemize}
\end{theorem}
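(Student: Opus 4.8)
The plan is to treat uniqueness and existence separately, the former being essentially forced by the axioms and the latter requiring an explicit construction. \emph{Uniqueness} is immediate from the Recurrence Formula axiom: for a fixed admissible pair $(f,\Omega)$ only finitely many orbit types occur on the compact set $\overline\Omega$ (a standard consequence of the fact that a compact Lie group acting on a compact space has finitely many orbit types), so the poset of relevant $(H)\in\Phi_0(\cG)$ is finite; starting from the maximal orbit types (for which the sum in \eqref{eq:RF-0} is empty) and descending, the coefficients $n_H$ are determined recursively and without ambiguity by the Brouwer degrees $d_H:=\deg(f^H,\Omega^H)$. Thus any map satisfying the stated properties must coincide with the one defined below, and the whole burden of the theorem is \emph{existence}.

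For existence I would first record that for an admissible pair the restriction $f^H:=f|_{V^H}$ is a well-defined $\Omega^H$-admissible map on the finite-dimensional space $V^H$ (equivariance sends $V^H$ to itself and $f$ has no zeros on $\partial\Omega\supseteq\partial\Omega^H$), so the classical Brouwer degree $d_H$ makes sense. I would then define the $n_H$ by back-substitution in \eqref{eq:RF-0}, which is nothing but the inversion of the \emph{table of marks}. Concretely, the homomorphism $m:A_0(\cG)\to\prod_{(H)}\bz$ sending a generator $(K)$ to the tuple of marks $\bigl(|(\cG/K)^H|\bigr)_{(H)}$ is triangular with respect to the partial order, with diagonal entries $|W(H)|$, hence injective with a right inverse over $\bq$; the key bookkeeping identity is $|(\cG/K)^H|=n(H,K)\,|W(K)|$, which shows that \eqref{eq:RF-0} is exactly the statement $d_H=\bigl[m(\textstyle\sum_K n_K(K))\bigr]_H$ solved for $n_H$. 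In other words, the vector of Brouwer degrees $(d_H)$ is the mark vector of the element we wish to construct.

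The \textbf{main obstacle} is \emph{integrality}: one must show that the rational numbers produced by \eqref{eq:RF-0} are in fact integers, i.e.\ that after subtracting the contributions of strictly larger orbit types the residual Brouwer degree is divisible by $|W(H)|$. For this I would use equivariant general position: approximate $f$ (relatively to $\partial\Omega$) by a $\cG$-equivariant map transverse to $0$ on each stratum $V^H\setminus\bigcup_{(K)>(H)}V^K$. The zeros lying in the stratum of isotropy exactly $(H)$ then form finitely many \emph{free} $W(H)$-orbits inside $V^H$ (freeness because $W(H)$ acts freely on points of $V^H$ with isotropy exactly $H$), and the points of one orbit all carry the same local Brouwer index, since $W(H)$ permutes these zeros by isometries of $V^H$. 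Hence the part of $d_H$ not already accounted for by larger isotropy is an integer multiple of $|W(H)|$, which is precisely what makes $n_H\in\bz$. With integrality in hand, set $\gdeg(f,\Omega):=\sum_{(H)}n_H(H)\in A_0(\cG)$.

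It remains to verify the remaining axioms, all of which follow cleanly. The Recurrence Formula holds by definition. Additivity and Homotopy are inherited from the corresponding properties of the Brouwer degree applied on each $V^H$: the assignment $(f,\Omega)\mapsto(d_H)$ is additive and homotopy invariant, and since $m^{-1}$ is $\bz$-linear the $n_H$ inherit both properties. For Normalization, the identity on a neighborhood of $0$ gives $d_H=1$ for every $(H)$, while the all-ones vector is exactly $m((\cG))$ (the point $\cG/\cG$ has all marks equal to $1$), so $\gdeg(\id,\Omega)=(\cG)$. The Existence property follows from the inverted recurrence: if some $n_{H_i}\ne0$ then $d_K=\deg(f^K,\Omega^K)\ne0$ for some $(K)\ge(H_i)$, whence $f^K$ vanishes at a point $x\in\Omega^K\subset\Omega$ with $(\cG_x)\ge(K)\ge(H_i)$. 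Finally, Multiplicativity reduces to an algebraic fact about marks: since $(V_1\times V_2)^H=V_1^H\times V_2^H$ and $(f_1\times f_2)^H=f_1^H\times f_2^H$, multiplicativity of the classical Brouwer degree gives $d_H(f_1\times f_2)=d_H(f_1)\,d_H(f_2)$, i.e.\ the mark vectors multiply componentwise; because the Burnside product \eqref{eq:mult} is defined precisely so that $m$ is a ring homomorphism — the classical identity $|(\cG/H\times\cG/K)^L|=|(\cG/H)^L|\,|(\cG/K)^L|$ together with the orbit decomposition of $(\cG/H\times\cG/K)^L$ — the equality of mark vectors and injectivity of $m$ force $\gdeg(f_1\times f_2,\Omega_1\times\Omega_2)=\gdeg(f_1,\Omega_1)\cdot\gdeg(f_2,\Omega_2)$, completing the proof.
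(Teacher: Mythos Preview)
The paper does not give its own proof of this theorem: it is stated in the appendix as background material and simply attributed to \cite{AED}. Your sketch is essentially the standard argument one finds there --- define the coefficients by inverting the table of marks via the recurrence formula, obtain integrality from equivariant transversality together with the free $W(H)$-action on the stratum of exact isotropy $(H)$, and read off the remaining axioms from the corresponding properties of the classical Brouwer degrees $d_H$ and the fact that the mark homomorphism $m:A_0(\cG)\to\prod_{(H)}\bz$ is an injective ring homomorphism. The argument is sound; the only place where a reader might want one more line is the integrality step, where the identification of the ``already accounted for'' contribution with $\sum_{(K)>(H)} n_K\, n(H,K)\,|W(K)|$ really uses the inductive hypothesis that the $n_K$ for $(K)>(H)$ count (with sign) the $W(K)$-orbits of zeros of exact isotropy $(K)$, but this is routine once stated.
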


The $\gdeg(f,\Omega)$ is 
 called the {\it $\mathcal G$%
-equivariant  Brouwer degree of $f$ in $\Omega$}.


\vs\noi
{\bf (c) Computation of Brouwer Equivariant Degree.} 
Consider a $\mathcal G$-equivariant linear isomorphism $T:V\to V$ and assume that $V$
has a $\mathcal G$-isotypic  decomposition \eqref{eq:Giso}. Then, by the
Multiplicativity property,
\begin{equation}
\label{eq:prod-prop}\mathcal G\mbox{\rm -}\deg(T,B(V))=\prod_{i=0}^{r}\mathcal G\mbox{\rm -}\deg
(T_{i},B(V_{i}))= \prod_{i=0}^{r}\prod_{\mu\in\sigma_{-}(T)} \left(
\deg_{\mathcal{V} _{i}}\right)  ^{m_{i}(\mu)}%
\end{equation}
where $T_{i}=T|_{V_{i}}$, $\sigma_{-}(T)$ denotes the real negative
spectrum of $T$ (i.e., $\sigma_{-}(T)=\left\{  \mu\in\sigma(T):\mu<0\right\})$ and $m_i(\mu) = \dim\big(E(\mu) \cap V_i\big)$
(here $E(\mu)$ stands for the generalized eigenspace of $T$ corresponding to $\mu$). 
Notice that the basic degrees can be effectively computed from \eqref{eq:RF-0}: 
\begin{align*}
\deg_{\mathcal{V} _{i}}=\sum_{(H)}n_{H}(H),
\end{align*}
where 
\begin{equation}
\label{eq:bdeg-nL}n_{H}=\frac{(-1)^{\dim\mathcal{V} _{i}^{H}}- \sum
_{H<K}{n_{K}\, n(H,K)\, \left|  W(K)\right|  }}{\left|  W(H)\right|  }.
\end{equation}

\section{GAP package {\tt EquiDeg}}\label{sec:GAP}
In this paper we used  the GAP package {\tt EquiDeg}, developed by Hao-Pin Wu.   In order to generate the amalgamated notation for the subgroups of $G$ we applied in the  GAP code the following abbreviations for the names of subgroups of $S_4\times \bz_2$: \;\;  {\tt  Z1$:=\bz_1$, Z2$:=\bz_2$, D1z$:=D_1^z$, D1$:=D_1$,  Z2m$:=\bz_2^-$,
  Z1p$:=\bz_1\times \bz_2$, Z3$:= \bz_3$, Z2p$:=\bz_2\times \bz_2$, V4m$:=V_4^-$, D2$:=D_2$,
   Z4$:=\bz_4$, V4$:=V_4$, D2z$:=D_2^z$, Z4d$:=\bz_4^d$, D2d$:=D_2^d$, 
    D1p$:=D_1\times \bz_2$, Z3p$:=\bz_3\times \bz_2$, D3$:=D_3$, D3z$:=D_3^z$, V4p$:=V_4\times \bz_2$,
 D4d$:=D_4^d$, Z4p$:=\bz_4\times \bz_2$, D4$:=D_4$, D2p$:=D_2\times \bz_2$, D4z$:=D_4^z$, 
  D4hd$:=D_4^{\wh d}$, D3p$:=D_3\times \bz_2$, A4$:=A_4$, D4p$:=D_4\times \bz_2$, S4$:=S_4$,
  A4p$:=A_4\times \bz_2$, S4m$:=S_4^-$, S4p$:=S_4\times \bz_2$,
} where we use the same notation as in \cite{AED}, page 157.

\vs 
\noi{\small {\bf GAP code} used in this paper:\\
\begin{lstlisting}[language=GAP, frame=single]
 LoadPackage( "EquiDeg" );
# generate O(2), S4 and   Z2
  o2 := OrthogonalGroupOverReal( 2 );
  s4 := SymmetricGroup( 4 );
  z2 := pCyclicGroup( 2 );
# generate S4 x Z2
  g1 := DirectProduct( s4, z2 );
# set names for g1
  ccsg1 := ConjugacyClassesSubgroups (g1);
  ccsg1_names := [ "Z1", "Z2", "D1z", "D1", "Z2m",
  "Z1p", "Z3", "Z2p", "V4m", "D2",
  "Z4", "V4", "D2z", "Z4d", "D2d", 
  "D1p", "Z3p", "D3", "D3z", "V4p",
  "D4d", "Z4p", "D4", "D2p", "D4z", 
  "D4hd", "D3p", "A4", "D4p", "S4",
  "A4p", "S4m", "S4p"];
  ListA( ccsg1, ccsg1_names, SetAbbrv );
# generate the group G=O(2)xS4xZ2
 G := DirectProduct( o2, g1 );
  ccs := ConjugacyClassesSubgroups( G );
# Character Table for S4xZ2
  tbl := CharacterTable (  g1  );
  Display( tbl );
  Display( ConjugacyClasses(  g1  ));
# unit element in AG
  u:=-BasicDegree(Irr( G )[0,1]);
# Compute basic degrees deg[l,k] for W_l\times U_k^-
  deg10:=BasicDegree( Irr( G )[1,3]);
  deg20:=BasicDegree( Irr( G )[2,3]);
  deg30:=BasicDegree( Irr( G )[3,3]);
  deg03:=BasicDegree( Irr( G )[0,7]);
  deg13:=BasicDegree( Irr( G )[1,7]);
  deg04:=BasicDegree( Irr( G )[0,8]);  
# determining maximal orbit types in M_1
  maxorbit10:=MaximalOrbitTypes(Irr( G )[1,3]);
  maxorbit11:=MaximalOrbitTypes( Irr( G )[1,2]);
  maxorbit13:=MaximalOrbitTypes( Irr( G )[1,7]);
  maxorbit14:=MaximalOrbitTypes( Irr( G )[1,8]);
  M1 := MaximalElements(Union(maxorbit10,
maxorbit11,maxorbit13,maxorbit14));
# identifying the maximal orbit types in M_1
  IdCCS(M1[1]);View(M1[1]); 
  IdCCS(M1[2]);View(M1[2]);
  IdCCS(M1[3]);View(M1[3]);
# Compute deg of F
 deg:=u - deg10*deg20*deg30*deg03*deg13*deg04;
# get amalgamation symbols for the related orbit types, e.g. for H[1,131] by
  ccs[1,131];
\end{lstlisting}}

%

\end{document}